\documentclass[a4paper,english,fontsize=10pt,parskip=half,abstracton]{scrartcl}
\usepackage{babel}
\usepackage[utf8]{inputenc}
\usepackage[T1]{fontenc}
\usepackage[a4paper,left=20mm,right=20mm,top=30mm,bottom=30mm]{geometry}
\usepackage{amsmath}
\usepackage{amsthm}
\usepackage{amssymb}
\usepackage{enumerate}
\usepackage[bookmarks=false,
            pdftitle={Brauer's Height Zero Conjecture for metacyclic defect groups},
            pdfauthor={Benjamin Sambale},
            pdfkeywords={blocks, metacyclic defect group},
            pdfstartview={FitH}]{hyperref}

\newtheorem{Theorem}{Theorem}[section]
\newtheorem{Lemma}[Theorem]{Lemma}
\newtheorem{Proposition}[Theorem]{Proposition}
\newtheorem{Corollary}[Theorem]{Corollary}
\numberwithin{equation}{section}

\renewcommand{\phi}{\varphi}
\newcommand{\C}{\operatorname{C}}

\newcommand{\N}{\operatorname{N}}
\newcommand{\Z}{\operatorname{Z}}
\newcommand{\cohom}{\operatorname{H}}
\newcommand{\Aut}{\operatorname{Aut}}
\newcommand{\Out}{\operatorname{Out}}
\newcommand{\pcore}{\operatorname{O}}
\newcommand{\Irr}{\operatorname{Irr}}

\newcommand{\Gal}{\operatorname{Gal}}
\newcommand{\Ob}{\operatorname{Ob}}
\newcommand{\Hom}{\operatorname{Hom}}

\mathchardef\ordinarycolon\mathcode`\:  
 \mathcode`\:=\string"8000
 \begingroup \catcode`\:=\active
   \gdef:{\mathrel{\mathop\ordinarycolon}}
 \endgroup

\title{Brauer's Height Zero Conjecture\\ for metacyclic defect groups}
\author{Benjamin Sambale}
\date{\today}

\begin{document}
\frenchspacing
\maketitle
\begin{abstract}\noindent
We prove that Brauer's Height Zero Conjecture holds for $p$-blocks of finite groups with metacyclic defect groups. 
If the defect group is nonabelian and contains a cyclic maximal subgroup, we obtain the distribution into $p$-conjugate and $p$-rational irreducible characters. Then the Alperin-McKay Conjecture follows provided $p=3$. Along the way we verify a few other conjectures.
Finally we consider the extraspecial defect group of order $p^3$ and exponent $p^2$ for an odd prime more closely. Here for blocks with inertial index $2$ we prove the Galois-Alperin-McKay Conjecture by computing $k_0(B)$. Then for $p\le 11$ also Alperin's Weight Conjecture follows. This improves some results of [Gao, 2012], [Holloway-Koshitani-Kunugi, 2010] and [Hendren, 2005].
\end{abstract}

\textbf{Keywords:} Brauer's Height Zero Conjecture, metacyclic defect groups, Alperin's Weight Conjecture\\
\textbf{AMS classification:} 20C15, 20C20

\section{Introduction}
An important task in representation theory is the determination of the invariants of a block of a finite group when its defect group is given. For a $p$-block $B$ of a finite group $G$ we are interested in the number $k(B)$ of irreducible ordinary characters and the number $l(B)$ of irreducible Brauer characters of $B$. Let $D$ be a defect group of $B$. Then the irreducible ordinary characters split into $k_i(B)$ characters of height $i\ge 0$. Here the \emph{height} $h(\chi)$ of a character $\chi$ in $B$ is defined by $\chi(1)_p=p^{h(\chi)}|G:D|_p$.

If $p=2$, the block invariants for several defect groups were obtained in the last years. In particular the invariants are known if the defect group is metacyclic (see \cite{Sambale}). However, for odd primes $p$ the situation is somewhat more complicated. Here even in the smallest interesting example of an elementary abelian defect group of order $9$ the block invariants are not determined completely (see \cite{Kiyota}). Nevertheless Brauer's $k(B)$-Conjecture and Olsson's Conjecture were proved for all blocks with metacyclic defect groups by \cite{GaoBrauer,YangOlsson}.
Following these lines we obtain in this paper that also Brauer's Height Zero Conjecture is fulfilled for these blocks. 
The proof uses the notion of lower defect groups and inequalities from \cite{HKS}. Moreover, if $G$ is $p$-solvable, we obtain the algebra structure of $B$ with respect to an algebraically closed field of characteristic $p$.
If one restricts to blocks with maximal defect, the precise invariants were determined in \cite{Gaofull}. We can confirm at least some of these values. For principal blocks there is even a perfect isometry between $B$ and its Brauer's correspondent in $\N_G(D)$ by the main theorem of \cite{WatanabePerfIso}.

In the second part of the paper we consider the (unique) nonabelian $p$-group with a cyclic subgroup of index $p$ as a special case. Here the difference $k(B)-l(B)$ is known from \cite{GaoZeng}. We confirm this result and derive the distribution into $p$-conjugate and $p$-rational irreducible characters. We also show that $k_i(B)=0$ for $i\ge 2$. This implies various numerical conjectures.
Moreover, it turns out that the Alperin-McKay Conjecture holds provided $p=3$. This is established by computing $k_0(B)$. Here in case $|D|\le 3^4$ we even obtain the other block invariants $k(B)$, $k_i(B)$ and $l(B)$ which leads to a proof of Alperin's Weight Conjecture in this case.
This generalizes some results from \cite{Holloway}, where these blocks were considered under additional assumptions on $G$.

The smallest nonabelian example for a metacyclic defect group for an odd prime is the extraspecial defect group $p^{1+2}_-$ of order $p^3$ and exponent $p^2$. For this special case Hendren obtained in \cite{Hendren2} some inequalities on the invariants. In \cite{Schulz} one can find results for these blocks under the hypothesis that $G$ is $p$-solvable.
The present paper improves both of these works. In particular if the inertial index $e(B)$ of $B$ is $2$, we verify the Galois-Alperin-McKay Conjecture (see \cite{IsaacsNavarro}), a refinement of the Alperin-McKay Conjecture.
As a consequence, for $p\le 11$ we are able to determine the block invariants $k(B)$, $k_i(B)$ and $l(B)$ completely without any restrictions on $G$. 
Then we use the opportunity to prove severals conjectures including Alperin's Weight Conjecture for this special case. As far as I know, these are the first nontrivial examples of Alperin's Conjecture for a nonabelian defect group for an odd prime.

\section{Brauer's Height Zero Conjecture}
Let $B$ be a $p$-block of a finite group $G$ with metacyclic defect group $D$. Since for $p=2$ the block invariants are known and most of the conjectures are verified (see \cite{Sambale}), we assume $p>2$ for the rest of the paper. If $D$ is abelian, Brauer's Height Zero Conjecture is true by \cite{KessarMalle} (using the classification).
Hence, we can also assume that $D$ is nonabelian. Then we have to distinguish whether $D$ splits or not. In the nonsplit case the main theorem of \cite{GaoBrauer} says that $B$ is nilpotent. Again, the Height Zero Conjecture holds. Thus, let us assume that $D$ is a nonabelian split metacyclic group. Then $D$ has a presentation of the form
\begin{equation}\label{pres}
D=\langle x,y\mid x^{p^m}=y^{p^n}=1,\ yxy^{-1}=x^{1+p^l}\rangle
\end{equation}
with $0<l<m$ and $m-l\le n$. Many of the results in this paper will depend on these parameters.
Assume that the map $x\to x^{\alpha_1}$ generates an automorphism of $\langle x\rangle$ of order $p-1$. 
Then by Theorem~2.5 in \cite{GaoBrauer} the map $\alpha$ with $\alpha(x)=x^{\alpha_1}$ and $\alpha(y)=y$ is an automorphism of $D$ of order $p-1$. 
By the Schur-Zassenhaus Theorem applied to $\pcore_{p}(\Aut(D))\unlhd\Aut(D)$, $\langle\alpha\rangle$ is unique up to conjugation in $\Aut(D)$. In particular the isomorphism type of the semidirect product $D\rtimes\langle\alpha\rangle$ does not depend on the choice of $\alpha$. 
We denote the inertial quotient of $B$ by $I(B)$; in particular $e(B)=|I(B)|$. It is known that $I(B)$ is a $p'$-subgroup of the outer automorphism group $\Out(D)$.
Hence, we may assume that $I(B)\le\langle\alpha\rangle$. Sometimes we regard $\alpha$ as an element of $\N_G(D)$ by a slight abuse of notation.

We fix a Brauer correspondent $b_D$ of $B$ in $\C_G(D)$.
For an element $u\in D$ we have a $B$-subsection $(u,b_u)\in(D,b_D)$. Here $b_u$ is a Brauer correspondent of $B$ in $\C_G(u)$. Let $\mathcal{F}$ be the fusion system of $B$. Then by Proposition~5.4 in \cite{Stancu}, $\mathcal{F}$ is controlled. In particular $\C_D(u)$ is a defect group of $b_u$ (see Theorem~2.4(ii) in \cite{Linckelmann2}). In case $l(b_u)=1$ we denote the unique irreducible Brauer character of $b_u$ by $\phi_u$. Then the generalized decomposition numbers $d^u_{ij}$ form a vector $d^u:=(d^u_{\chi\phi_u}:\chi\in\Irr(B))$. More generally we have subpairs $(R,b_R)\le(D,b_D)$ for every subgroup $R\le D$. In particular $I(B)=\N_G(D,b_D)/D\C_G(D)$.
For $r\in\mathbb{N}$ we set $\zeta_r:=e^{2\pi i/r}$. 

\begin{Proposition}\label{elem}
Let $B$ be a $p$-block of a finite group with a nonabelian metacyclic defect group for an odd prime $p$. Then $l(B)\ge e(B)$. 
\end{Proposition}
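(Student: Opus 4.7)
The plan is to reduce the inequality to the local block in $\N_G(D)$ and exploit that the inertial quotient $E:=I(B)$ is cyclic of $p'$-order.

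First, I would compute $l(\beta)=e(B)$ for the Brauer correspondent $\beta$ of $B$ in $N:=\N_G(D,b_D)$. Here $D\unlhd N$ and $N/D\C_G(D)\cong E$. Since the defect group of $b_D$ in $\C_G(D)$ is $D\cap\C_G(D)=\Z(D)$, which is central in $\C_G(D)$, the block $b_D$ is nilpotent with $l(b_D)=1$. The standard Clifford-theoretic analysis of blocks with normal defect group (in the spirit of K\"ulshammer--Puig) then identifies $l(\beta)$ with the number of irreducible $\gamma$-projective characters of $E$ for an obstruction class $\gamma\in\cohom^2(E,k^\times)$. As $E\le\langle\alpha\rangle$ is cyclic of order dividing $p-1$, we have $\cohom^2(E,k^\times)=0$, so $\gamma$ is trivial and $l(\beta)=|E|=e(B)$.

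Second, I would deduce $l(B)\ge l(\beta)$ by using that $\mathcal{F}$ is controlled (by Stancu, as quoted in the text). One clean route is via Puig's source algebra theorem for controlled blocks, which produces a source algebra equivalence between $B$ and $\beta$ and, in particular, yields $l(B)=l(\beta)$. An alternative route, better matching the ``elementary'' flavour of the proposition, is to apply the lower-defect-group inequalities from \cite{HKS} referenced in the paper's overview: once $\mathcal{F}$ is controlled and $E$ is cyclic of $p'$-order, these inequalities give $l(B)\ge l(\beta)$ directly. Combining with the first step, $l(B)\ge e(B)$.

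The main obstacle is the second step, since in full generality $l(B)\ge l(\beta)$ is a form of Alperin's weight conjecture and is open. In our controlled setting with cyclic $E$ of $p'$-order, however, the vanishing of $\cohom^2(E,k^\times)$ and the rigidity provided by the source algebra description (or, equivalently, by the lower-defect-group machinery already cited for the main theorem) make the inequality accessible without invoking AWC in its entirety.
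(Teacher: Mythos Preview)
Your first step is fine: with $D\unlhd N$ and $E$ cyclic of $p'$-order, Külshammer--Puig (or Fong--Reynolds plus $\cohom^2(E,k^\times)=0$) does give $l(\beta)=e(B)$.

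The gap is in the second step. There is no ``Puig source algebra theorem for controlled blocks'' that yields a Morita or source-algebra equivalence between $B$ and its Brauer correspondent in $\N_G(D)$ in general; such a result would prove Alperin's Weight Conjecture for all controlled blocks, which is open. Indeed, immediately after this proposition the paper remarks that AWC would give $l(B)=e(B)$, so even the \emph{equality} is not known here. Your alternative appeal to \cite{HKS} is also off target: that paper supplies the upper bounds on $k_0(B)$ and $\sum p^{2i}k_i(B)$ used later, not an inequality of the form $l(B)\ge l(\beta)$. So as written, the second step is not an argument but a restatement of the difficulty you already flagged.

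The paper sidesteps the comparison with $\beta$ entirely. It bounds $l(B)$ from below by counting elementary divisors of the Cartan matrix of $B$: since $m(|D|)=1$, it suffices to show that $p^n$ occurs as an elementary divisor with multiplicity at least $e(B)-1$. Via the Brou\'e--Olsson subpair formula this reduces to $m_B^{(1)}(\langle y\rangle,b_y)\ge e(B)-1$. Because $\mathcal{F}$ is controlled and $I(B)$ fixes $y$, one gets $\N_G(\langle y\rangle,b_y)=\C_G(y)$, so the relevant multiplicity is computed in $b_y$ itself. Now $b_y$ dominates a block with cyclic defect group $\C_D(y)/\langle y\rangle\cong\langle x^{p^{m-l}}\rangle$ and inertial index $e(b_y)=e(B)$; by Dade/Fujii its Cartan matrix has $p^n$ as an elementary divisor exactly $e(B)-1$ times, and Olsson's restriction on lower defect groups forces these to sit over $\langle y\rangle$. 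This gives $l(B)\ge 1+(e(B)-1)=e(B)$. The point is that the argument lives entirely inside the Cartan/lower-defect-group formalism and never needs $l(B)\ge l(\beta)$.
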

\begin{proof}
We use the notation above. If $D$ is nonsplit, we have $e(B)=l(B)=1$. Thus, assume that $D$ is given by \eqref{pres}.
Let $m(d)$ be the multiplicity of $d\in\mathbb{N}$ as an elementary divisor of the Cartan matrix of $B$. It is well known that $m(p^{m+n})=m(|D|)=1$. Hence, it suffices to show $m(p^n)\ge e(B)-1$.

By Corollary~V.10.12 in \cite{Feit} we have
\[m(p^n)=\sum_{R\in\mathcal{R}}{m_B^{(1)}(R)}\]
where $\mathcal{R}$ is a set of representatives for the $G$-conjugacy classes of subgroups of $G$ of order $p^n$. After combining this with the formula (2S) of \cite{BroueOlsson} we get
\[m(p^n)=\sum_{(R,b_R)\in\mathcal{R}'}{m_B^{(1)}(R,b_R)}\]
where $\mathcal{R}'$ is a set of representatives for the $G$-conjugacy classes of $B$-subpairs $(R,b_R)$ such that $R$ has order $p^n$. 

Thus, it suffices to prove $m_B^{(1)}(\langle y\rangle,b_y)\ge e(B)-1$.
By (2Q) in \cite{BroueOlsson} we have $m^{(1)}_B(\langle y\rangle,b_y)=m^{(1)}_{B_y}(\langle y\rangle)$ where $B_y:=b_y^{\N_G(\langle y\rangle,b_y)}$. 
It is easy to see that $\N_D(\langle y\rangle)=\C_D(y)$, because $D/\langle x\rangle\cong\langle y\rangle$ is abelian.
Since $B$ is controlled and $I(B)$ acts trivially on $\langle y\rangle$, we get $\N_G(\langle y\rangle,b_y)=\C_G(y)$ and $B_y=b_y$. Thus, it remains to prove $m^{(1)}_{b_y}(\langle y\rangle)\ge e(B)-1$. 
Let $x^iy^j\in\C_D(y)\setminus\langle y\rangle$. Then $x^i\in\Z(D)$. Hence, by Theorem~2.3(2)(iii) in \cite{GaoBrauer} we have $\C_D(y)=\Z(D)\langle y\rangle=\langle x^{p^{m-l}}\rangle\times\langle y\rangle$.
By Proposition~2.1(b) in \cite{AnControlled}, also $b_y$ is a controlled block. Observe that $(\C_D(y),b_{\C_D(y)})$ is a maximal $b_y$-subpair. Since $\alpha\in\N_{\C_G(y)}(\C_D(y),b_{\C_D(y)})$, we see that $e(b_y)=e(B)$.

As usual, $b_y$ dominates a block of $\C_G(y)/\langle y\rangle$ with cyclic defect group $\C_D(y)/\langle y\rangle\cong\langle x^{p^{m-l}}\rangle$. Hence, $p^n$ occurs as elementary divisor of the Cartan matrix of $b_y$ with multiplicity $e(b_y)-1=e(B)-1$ (see \cite{Dade,Fujii}). By Corollary~3.7 in \cite{OlssonLDG} every lower defect group of $b_y$ must contain $\langle y\rangle$. This implies $m^{(1)}_{b_y}(\langle y\rangle)=e(B)-1$. 
\end{proof}

Since Alperin's Weight Conjecture would imply that $l(B)=e(B)$, it is reasonable that $\langle y\rangle$ and $D$ are the only (nontrivial) lower defect groups of $D$ up to conjugation. However, we do not prove this. 
We remark that Proposition~\ref{elem} would be false for abelian metacyclic defect groups (see \cite{Kiyota}).

We introduce a general lemma.

\begin{Lemma}\label{controlled}
Let $B$ be a controlled block of a finite group $G$ with Brauer correspondent $b_D$ in $\C_G(D)$. If $(u,b_u)\in(D,b_D)$ is a subsection such that $\N_G(D,b_D)\cap\C_G(u)\subseteq\C_D(u)\C_G(\C_D(u))$, then $e(b_u)=l(b_u)=1$.
\end{Lemma}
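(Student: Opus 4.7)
The plan is to show $e(b_u)=1$ by verifying directly that the inertial quotient
\[I(b_u)=\N_{\C_G(u)}(\C_D(u),b_{\C_D(u)})\big/\C_D(u)\C_G(\C_D(u))\]
is trivial, and then to deduce $l(b_u)=1$ by recognising $b_u$ as a nilpotent block. Here one uses that $(\C_D(u),b_{\C_D(u)})$ is a maximal $b_u$-subpair, since $\C_D(u)$ is the defect group of $b_u$ (as already invoked in the proof of Proposition~\ref{elem}).

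For the first step I would pick an arbitrary $g$ in the numerator of $I(b_u)$ and use controlled fusion to replace it, modulo $\C_G(\C_D(u))$, by an element of $\N_G(D,b_D)\cap\C_G(u)$. Concretely, conjugation by $g$ stabilises the subpair $(\C_D(u),b_{\C_D(u)})$, so it defines an automorphism of $\C_D(u)$ lying in the fusion system $\mathcal{F}$ of $B$. Since $B$ is controlled, this $\mathcal{F}$-morphism extends to some $\alpha\in\Aut_{\mathcal{F}}(D)$, and $\alpha$ is realised as conjugation by an element $h\in\N_G(D,b_D)$. Because $u\in\C_D(u)$ and $g\in\C_G(u)$, the automorphism $\alpha$ fixes $u$, so $h\in\C_G(u)$. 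The element $gh^{-1}$ then centralises $\C_D(u)$, which places $g$ in $(\N_G(D,b_D)\cap\C_G(u))\,\C_G(\C_D(u))$. By the hypothesis of the lemma this is contained in $\C_D(u)\,\C_G(\C_D(u))$, yielding $e(b_u)=1$.

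For the second step, $b_u$ is itself a controlled block by Proposition~2.1(b) of \cite{AnControlled} (the same citation used earlier). A controlled block whose inertial quotient is trivial has the trivial fusion system on its defect group and is therefore nilpotent, so Broué--Puig's theorem on nilpotent blocks gives $l(b_u)=1$.

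The main technical point is the lift of the $\mathcal{F}$-automorphism of $\C_D(u)$ to an actual group element $h\in\N_G(D,b_D)$ that also centralises $u$; but this is delivered essentially for free by the controlledness of $\mathcal{F}$ together with $u\in\C_D(u)$, so no real obstacle arises beyond carefully unpacking the definitions.
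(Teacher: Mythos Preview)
Your proof is correct and follows essentially the same route as the paper's: show $\N_{\C_G(u)}(\C_D(u),b_{\C_D(u)})\subseteq(\N_G(D,b_D)\cap\C_G(u))\,\C_G(\C_D(u))$ by lifting the $\mathcal{F}$-automorphism of $\C_D(u)$ to $\Aut_{\mathcal{F}}(D)$ via controlledness, apply the hypothesis to get $e(b_u)=1$, and then deduce nilpotence of $b_u$ from the fact that $b_u$ is itself controlled (Proposition~2.1 in \cite{AnControlled}). The paper is terser but uses exactly the same ingredients in the same order; your extra remark that $\alpha(u)=u$ (hence $h\in\C_G(u)$) just makes explicit what the paper leaves implicit.
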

\begin{proof}
By Proposition~2.1 in \cite{AnControlled}, $b_u$ is a controlled block with Sylow $b_u$-subpair $(\C_D(u),b_{\C_D(u)})$. Hence,
\[e(b_u)=\lvert\N_{\C_G(u)}(\C_D(u),b_{\C_D(u)})/\C_D(u)\C_G(\C_D(u))\rvert.\]
Every $\mathcal{F}$-automorphism on $\C_D(u)$ is a restriction from $\Aut_{\mathcal{F}}(D)$. This gives
\[\N_{\C_G(u)}(\C_D(u),b_{\C_D(u)})\subseteq(\N_G(D,b_D)\cap\C_G(u))\C_G(\C_D(u))\subseteq\C_D(u)\C_G(\C_D(u)).\]
Thus, we have $e(b_u)=1$. Since $b_u$ is controlled, it follows that $b_u$ is nilpotent and $l(b_u)=1$.
\end{proof}

\begin{Theorem}\label{lowerbound}
Let $B$ be a $p$-block of a finite group with a nonabelian split metacyclic defect group for an odd prime $p$. Then \[k(B)\ge\biggl(\frac{p^l+p^{l-1}-p^{2l-m-1}-1}{e(B)}+e(B)\biggr)p^n.\]
\end{Theorem}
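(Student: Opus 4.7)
The plan is to evaluate $k(B)$ via Brauer's subsection formula $k(B)=\sum_{(u,b_u)/G}l(b_u)$ in combination with the controlled-fusion bijection between $G$-conjugacy classes of subsections and $\Aut_{\mathcal{F}}(D)$-orbits on $D$, where $\Aut_{\mathcal{F}}(D)=\operatorname{Inn}(D)\rtimes I(B)$. Since $\alpha$ fixes $y$ and $\operatorname{Inn}(D)$ acts trivially on $D/\langle x\rangle\cong\langle y\rangle$, the powers $y^0,\ldots,y^{p^n-1}$ represent $p^n$ pairwise distinct $\mathcal{F}$-orbits. For any other representative $u=x^iy^j$ (so $p^m\nmid i$), the stabilizer of $u$ in $I(B)$ is trivial because $\alpha_1-1$ is coprime to $p$; hence $\N_G(D,b_D)\cap\C_G(u)=\C_D(u)\C_G(D)\subseteq\C_D(u)\C_G(\C_D(u))$, and Lemma~\ref{controlled} forces $l(b_u)=1$.

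The central step is $l(b_{y^j})\ge e(B)$ for every $j\in\mathbb{Z}/p^n$. Since $\langle y\rangle\subseteq\C_G(y^j)$, the pair $(\langle y\rangle,b_y)$ is a $b_{y^j}$-subpair, and the identity $\N_{\C_G(y^j)}(\langle y\rangle,b_y)=\C_G(y)$ (which follows from $\N_G(\langle y\rangle,b_y)=\C_G(y)$ as in Proposition~\ref{elem}) lets one transfer the lower-defect multiplicity via~(2Q) of~\cite{BroueOlsson}:
\[m^{(1)}_{b_{y^j}}(\langle y\rangle,b_y)=m^{(1)}_{b_y}(\langle y\rangle)=e(B)-1,\]
the last equality being exactly the core computation in Proposition~\ref{elem} (the cyclic-defect quotient block of $b_y$ has inertial index $e(B)$). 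Hence $p^n$ is an elementary divisor of the Cartan matrix of $b_{y^j}$ of multiplicity at least $e(B)-1$; combined with the top elementary divisor $|\C_D(y^j)|\in\{p^{l+n},p^{m+n}\}$, which strictly exceeds $p^n$ since $l\ge 1$, this yields $l(b_{y^j})\ge e(B)$.

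Let $N$ count the $\mathcal{F}$-orbits disjoint from $\langle y\rangle$. Each such orbit has size $e(B)\cdot|D|/|\C_D(u)|$ by the stabilizer analysis above, while each orbit of some $y^j$ coincides with the $D$-conjugacy class of $y^j$ (as $\alpha$ fixes $y^j$ and permutes the $D$-class). The identity $\sum_{u\in D}|\C_D(u)|=|D|\cdot k(D)$ then gives $|D|k(D)=p^n|D|+Ne(B)|D|$, so $N=(k(D)-p^n)/e(B)$. A centralizer analysis in $D$ shows $|\C_D(x^iy^j)|=p^{n+l+\min(v_p(i),v_p(j))}$ (with $v_p(0)=\infty$ and $\min$ capped at $m-l$); summing $1/|\text{class}|$ over $D$ yields $k(D)=p^{n+l}+p^{n+l-1}-p^{n+2l-m-1}$. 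Substituting,
\[k(B)\ge e(B)p^n+N=p^n\biggl(e(B)+\frac{p^l+p^{l-1}-p^{2l-m-1}-1}{e(B)}\biggr),\]
as claimed.

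The main obstacle is uniformly establishing $l(b_{y^j})\ge e(B)$: the defect group of $b_{y^j}$ jumps between the full $D$ (when $y^j\in\Z(D)$) and the abelian group $\C_D(y)$ (otherwise), but in both situations the transfer of the multiplicity $e(B)-1$ via~(2Q) applied to the $b_{y^j}$-subpair $(\langle y\rangle,b_y)$ is the crucial device, sidestepping any need for Broué-type conjectures about blocks with abelian defect. The combinatorial evaluation of $k(D)$ is routine but requires a careful orbit count under $\langle x\rangle$-conjugation.
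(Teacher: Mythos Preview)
Your argument follows the paper's overall strategy (Brauer's formula plus controlled-fusion orbit counting), but one step needs repair and one is genuinely different from the paper.

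\textbf{The gap.} For $u=x^iy^j$ not $\mathcal{F}$-conjugate to an element of $\langle y\rangle$, you write ``the stabilizer of $u$ in $I(B)$ is trivial, hence $\N_G(D,b_D)\cap\C_G(u)=\C_D(u)\C_G(D)$''. Triviality of $\operatorname{Stab}_{I(B)}(u)$ only says no nontrivial $\beta\in I(B)$ satisfies $\beta(u)=u$; it does not exclude $d\beta(u)d^{-1}=u$ for some $d\in D$, i.e., that $\beta$ fixes the $D$-\emph{class} of $u$. You need the latter both to apply Lemma~\ref{controlled} and to justify the orbit-size formula underlying $N=(k(D)-p^n)/e(B)$. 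The paper supplies this via Schur--Zassenhaus: any $p'$-element of $\Aut_{\mathcal{F}}(D)$ centralising $u$ is $D$-conjugate into $I(B)$, and $\C_D(\beta)=\langle y\rangle$ for every nontrivial $\beta\in I(B)$. A direct fix in your language: if a nontrivial $\beta\in I(B)$ stabilised a $D$-class $C$ of $p$-power size, then by coprimality $\beta$ would fix a point of $C$, forcing $C\cap\langle y\rangle\ne\varnothing$.

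\textbf{A minor inaccuracy.} You assert $|\C_D(y^j)|\in\{p^{l+n},p^{m+n}\}$; in fact $|\C_D(y^j)|=p^{\,n+l+\min(v_p(j),\,m-l)}$ ranges over all intermediate values. This is harmless, since all you use is $|\C_D(y^j)|>p^n$.

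\textbf{Where you differ from the paper.} Your proof of $l(b_{y^j})\ge e(B)$ is not the paper's. The paper splits into cases: if $\C_D(y^j)$ is nonabelian it applies Proposition~\ref{elem} to $b_{y^j}$, and if $\C_D(y^j)$ is abelian (equivalently $\C_D(y^j)=\C_D(y)$) it invokes Watanabe's theorem~\cite{Watanabe1} to obtain $l(b_{y^j})=e(B)$. You instead observe that $(\langle y\rangle,b_y)$ is a $b_{y^j}$-subpair with $\N_{\C_G(y^j)}(\langle y\rangle,b_y)=\C_G(y)$, so (2Q) of~\cite{BroueOlsson} transports the multiplicity $m^{(1)}_{b_y}(\langle y\rangle)=e(B)-1$ computed in Proposition~\ref{elem} directly to $b_{y^j}$, uniformly in $j$. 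This is a clean alternative that avoids the appeal to~\cite{Watanabe1}.
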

\begin{proof}
If $e(B)=1$, the block $B$ is nilpotent. Then the claim follows from Theorem~2.3(2)(iii) in \cite{GaoBrauer} and Remark~2.4 in \cite{HK}. So, assume $e(B)>1$. The idea is to use Brauer's formula Theorem~5.9.4 in \cite{Nagao}.
Let $u\in D$. Then $b_u$ has metacyclic defect group $\C_D(u)$. Assume first that $u\in\C_D(I(B))$. Since $I(B)$ acts freely on $\langle x\rangle$, we see that $u\in\langle y\rangle$.
As in the proof of Proposition~\ref{elem} (for $u=y$), we get $e(b_u)=e(B)$. If $\C_D(u)$ is nonabelian, Proposition~\ref{elem} implies $l(b_u)\ge e(B)$. Now suppose that $\C_D(u)$ is abelian. Since $y\in\C_D(u)$, it follows that $\C_D(u)=\C_D(y)=\langle x^{p^{m-l}}\rangle\times\langle y\rangle$. Thus, by Theorem~1 in \cite{Watanabe1} we have $l(b_u)=l(b_y)=e(B)$.

Now assume that $u$ is not $\mathcal{F}$-conjugate to an element of $\C_D(I(B))=\langle y\rangle$.
We are going to show that $e(b_u)=l(b_u)=1$ by using Lemma~\ref{controlled}. For this let $\gamma\in(\N_G(D,b_D)\cap\C_G(u))\setminus\C_D(u)\C_G(\C_D(u))$ by way of contradiction. Since $D\C_G(D)\cap\C_G(u)=\C_G(D)\C_D(u)\subseteq\C_D(u)\C_G(\C_D(u))$, $\gamma$ is not a $p$-element.
Hence, after replacing $\gamma$ by a suitable power if necessary, we may assume that $\gamma$ is a nontrivial $p'$-element modulo $\C_G(D)$. 
By the Schur-Zassenhaus Theorem (in our special situation one could use more elementary theorems) applied to $D/\Z(D)\unlhd \Aut_{\mathcal{F}}(D)$, $\gamma$ is $D$-conjugate to a nontrivial power of $\alpha$ (modulo $\C_G(D)$). But then $u$ is $D$-conjugate to an element of $\langle y\rangle$. Contradiction. Hence, we have $\N_G(D,b_D)\cap\C_G(u)\subseteq\C_D(u)\C_G(\C_D(u))$ and $e(b_u)=l(b_u)=1$ by Lemma~\ref{controlled}.

It remains to determine a set $\mathcal{R}$ of representatives for the $\mathcal{F}$-conjugacy classes of $D$ (see Lemma~2.4 in \cite{SambaleD2nastC2m}). 
Since the powers of $y$ are pairwise nonconjugate in $\mathcal{F}$, we get $p^n$ subsections $(u,b_u)$ such that $l(b_u)\ge e(B)$ (including the trivial subsection). 

By Theorem~2.3(2)(iii) in \cite{GaoBrauer} we have $|D'|=p^{m-l}$ and $\lvert\Z(D)\rvert=p^{n-m+2l}$. Hence, Remark~2.4 in \cite{HK} implies that $D$ has precisely $p^{n-m+2l-1}(p^{m-l+1}+p^{m-l}-1)$ conjugacy classes. 
Let $C$ be one of these classes which do not intersect $\langle y\rangle$. 
Assume $\alpha^i(C)=C$ for some $i\in\mathbb{Z}$ such that $\alpha^i\ne 1$. Then there are elements $u\in C$ and $w\in D$ such that $\alpha^i(u)=wuw^{-1}$. Hence $\gamma:=w^{-1}\alpha^i\in\N_G(D,b_D)\cap\C_G(u)$. Since $\gamma$ is not a $p$-element, we get a contradiction as above. This shows that no nontrivial power of $\alpha$ can fix $C$ as a set. Thus, all these conjugacy classes split in 
\[\frac{p^{m-l+1}+p^{m-l}-p^{m-2l+1}-1}{e(B)}p^{n-m+2l-1}\]
orbits of length $e(B)$ under the action of $I(B)$. For every element $u$ in one of these classes we have $l(b_u)=1$ as above. This gives
\[k(B)=\sum_{u\in\mathcal{R}}{l(b_u)}\ge e(B)p^n+\frac{p^l+p^{l-1}-p^{2l-m-1}-1}{e(B)}p^n.\qedhere\]
\end{proof}

The results for blocks with maximal defect in \cite{Gaofull} show that the bound in Theorem~\ref{lowerbound} is sharp (after evaluating the geometric series in Theorem~1.1 of \cite{Gaofull}).

\begin{Theorem}\label{HZC}
Let $B$ be a $p$-block of a finite group with a nonabelian split metacyclic defect group $D$ for an odd prime $p$. Then
\begin{align*}
k_0(B)&\le\biggl(\frac{p^l-1}{e(B)}+e(B)\biggr)p^n\le p^{n+l}=|D:D'|,\\
\sum_{i=0}^{\infty}{p^{2i}k_i(B)}&\le\biggl(\frac{p^l-1}{e(B)}+e(B)\biggr)p^{n+m-l}\le p^{n+m}=|D|,\\
k_i(B)&=0\hspace{5mm}\text{for}\hspace{5mm}i>\min\biggl\{2(m-l),\frac{m+n-1}{2}\biggr\}.
\end{align*}
In particular $k_0(B)<k(B)$, i.\,e. Brauer's Height Zero Conjecture holds for $B$.
\end{Theorem}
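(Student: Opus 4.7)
The plan is to bound $k_0(B)$ and $\sum_i p^{2i}k_i(B)$ via generalized decomposition numbers at the nilpotent subsections identified in the proof of Theorem~\ref{lowerbound}, and to obtain the Height Zero Conjecture by comparing with the lower bound for $k(B)$ from that theorem. Recall from that proof that every $u\in D$ not $\mathcal{F}$-conjugate into $\langle y\rangle$ satisfies $l(b_u)=1$, so the Cartan matrix of $b_u$ equals $(|\C_D(u)|)$ and the generalized decomposition column $d^u=(d^u_{\chi\phi_u})_{\chi\in\Irr(B)}$ satisfies $\sum_{\chi}|d^u_\chi|^2=|\C_D(u)|$. Expanding each $d^u_\chi\in\mathbb{Z}[\zeta_{|u|}]$ in the integral basis $1,\zeta_{|u|},\zeta_{|u|}^2,\ldots$ with integer coordinates $a^{u,k}_\chi$, and using the Broué-Olsson-type divisibility $p^{h(\chi)}\mid a^{u,k}_\chi$ (cf.\ the inequalities in \cite{HKS}), one obtains
\[\sum_{\chi}p^{2h(\chi)}(a^{u,k}_\chi)^2\le |\C_D(u)|\]
for each basis coordinate~$k$.

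For the bound on $k_0(B)$ I would take $u\in\langle x\rangle$ of order $p$ and sum the above over a system of $I(B)$-orbit representatives of the $\mathcal{F}$-classes of such elements, combining the Galois-conjugate columns so that the constant $\frac{p^l-1}{e(B)}+e(B)$ (mirroring the orbit count in Theorem~\ref{lowerbound}) emerges together with the centralizer factor $p^n$ coming from $|\C_D(u)|$. The coarser estimate $k_0(B)\le p^{n+l}=|D:D'|$ then follows from the elementary inequality $\frac{p^l-1}{e(B)}+e(B)\le p^l$, valid for $1\le e(B)\le p-1\le p^l$. The same argument with full $p^{2h(\chi)}$-weighting, summed over all $\mathcal{F}$-classes (using the count of $D$-classes from \cite{HK}), produces the stated bound on $\sum_i p^{2i}k_i(B)$, with the coarsening to $|D|$ again elementary. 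The vanishing $k_i(B)=0$ for $i>2(m-l)$ comes from the standard height inequality $p^{h(\chi)}\le|D:\Z(D)|=p^{2(m-l)}$ (using $|\Z(D)|=p^{n-m+2l}$ from \cite{GaoBrauer}), while the bound $i\le(m+n-1)/2$ is the Brauer-Feit-type estimate $p^{2h(\chi)}<|D|$.

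Brauer's Height Zero Conjecture then drops out by subtraction: the lower bound for $k(B)$ in Theorem~\ref{lowerbound} exceeds the $k_0(B)$-bound above by $\frac{p^{l-1}(1-p^{l-m})}{e(B)}p^n>0$ since $l<m$, so $k_0(B)<k(B)$ and at least one character of positive height exists. The main obstacle is the precise counting in the $k_0(B)$-bound: the combination of the $I(B)$-orbit decomposition on the $\mathcal{F}$-classes of non-central elements with the Galois action on $\mathbb{Z}[\zeta_p]^{\Irr(B)}$ must be arranged carefully so that the sharp constant $\frac{p^l-1}{e(B)}+e(B)$ is recovered, rather than only a weaker multiple of it that would still suffice for HZC but not for the displayed estimate.
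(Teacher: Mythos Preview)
Your derivation of the Height Zero Conjecture from the displayed inequalities and Theorem~\ref{lowerbound}, as well as the two height-vanishing statements, is correct and matches the paper. The problem lies in how you propose to obtain the two main inequalities.

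For the bound on $k_0(B)$ you choose $u\in\langle x\rangle$ of order $p$. Any such element is $x^{jp^{m-1}}$ and hence lies in $\langle x^{p^{m-l}}\rangle\subseteq\Z(D)$, so $\C_D(u)=D$ has order $p^{m+n}$, not $p^n$ as you state. More seriously, a major subsection with $l(b_u)=1$ controls $\sum_i p^{2i}k_i(B)$ via $p^{h(\chi)}\mid d^u_\chi$, but it does not isolate $k_0(B)$: the divisibility goes the wrong way for that. There is also no mechanism by which ``summing over $I(B)$-orbit representatives'' of such columns produces an \emph{upper} bound on $k_0(B)$; orthogonality of columns belonging to different $\mathcal{F}$-classes gives equalities that add up to $k(B)$-type quantities, not inequalities for $k_0(B)$. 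Finally, there are only $p-1$ elements of order $p$ in $\langle x\rangle$, so the factor $p^l-1$ cannot arise from them by orbit counting.

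The paper instead applies two separate results from \cite{HKS} at two \emph{different} subsections. For $k_0(B)$ it uses the non-major subsection $(y,b_y)$: here $l(b_y)=e(B)$ and $\C_D(y)/\langle y\rangle\cong\langle x^{p^{m-l}}\rangle$ is cyclic of order $p^l$, and Proposition~2.5(i) of \cite{HKS} gives directly
\[k_0(B)\le\biggl(\frac{p^l-1}{e(B)}+e(B)\biggr)|\langle y\rangle|=\biggl(\frac{p^l-1}{e(B)}+e(B)\biggr)p^n.\]
For $\sum_i p^{2i}k_i(B)$ it uses the major subsection at $u=x^{p^{m-l}}\in\Z(D)$, which has $l(b_u)=1$ and $\lvert\Aut_{\mathcal{F}}(\langle u\rangle)\rvert=e(B)$; then Theorem~4.10 of \cite{HKS} yields the second inequality with the factor $|D|/|\langle u\rangle|=p^{m+n-l}$. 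So the sharp constant $\frac{p^l-1}{e(B)}+e(B)$ enters in two structurally different ways: once via $l(b_y)=e(B)$ together with the cyclic quotient of order $p^l$, and once via the $I(B)$-action on a cyclic central subgroup of order $p^l$. Your sketch conflates these and misses the crucial role of the subsection $(y,b_y)$.
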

\begin{proof}
We consider the subsection $(y,b_y)$. We have already seen that $l(b_y)=e(B)$ and $\C_D(y)/\langle y\rangle$ is cyclic of order $p^l$. Hence, Proposition~2.5(i) in \cite{HKS} implies the first inequality. For the second we consider $u:=x^{p^{m-l}}\in\Z(D)$. Since $u$ is not $D$-conjugate to a power of $y$, the proof of Theorem~\ref{lowerbound} gives $l(b_u)=1$. Moreover, $\lvert\Aut_{\mathcal{F}}(\langle u\rangle)\rvert=e(B)$. Thus, Theorem~4.10 in \cite{HKS} shows the second claim.
Since $k_0(B)>0$, it follows at once that $k_i(B)=0$ for $i>(n+m-1)/2$. On the other hand Corollary~V.9.10 in \cite{Feit} implies $k_i(B)=0$ for $i>2(m-l)$.

Now we discuss the claim $k_0(B)<k(B)$. By Theorem~\ref{lowerbound} it suffices to show
\[\biggl(\frac{p^l-1}{e(B)}+e(B)\biggr)p^n<\biggl(\frac{p^l+p^{l-1}-p^{2l-m-1}-1}{e(B)}+e(B)\biggr)p^n.\]
This reduces to $l<m$, one of our hypotheses. 
\end{proof}

Again for blocks with maximal defect the bound on $k_0(B)$ in Theorem~\ref{HZC} is sharp (see \cite{Gaofull}). On the other hand the bound on the height of the irreducible characters is probably not sharp in general.

\begin{Corollary}\label{kB}
Let $B$ be a $p$-block of a finite group with a nonabelian split metacyclic defect group for an odd prime $p$. Then
\[k(B)\le\biggl(\frac{p^l-1}{e(B)}+e(B)\biggr)(p^{n+m-l-2}+p^n-p^{n-2}).\]
\end{Corollary}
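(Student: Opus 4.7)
The plan is to combine the two height inequalities already established in Theorem~\ref{HZC}. Set $C:=\frac{p^l-1}{e(B)}+e(B)$ for the common factor, so that Theorem~\ref{HZC} provides both the linear bound $k_0(B)\le C p^n$ and the quadratic moment bound $\sum_{i\ge 0}p^{2i}k_i(B)\le C p^{n+m-l}$. The target inequality is exactly what falls out of the elementary optimisation of $k(B)=k_0(B)+\sum_{i\ge 1}k_i(B)$ subject to these two pieces of data; no further block-theoretic input is required.

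Concretely, I would first observe that for every $i\ge 1$ one has $p^{2i}\ge p^2$, so $k_i(B)\le p^{-2}\cdot p^{2i}k_i(B)$. Summing over $i\ge 1$ and separating the $i=0$ contribution in the moment bound yields
\[\sum_{i\ge 1}k_i(B)\le p^{-2}\Bigl(\sum_{i\ge 0}p^{2i}k_i(B)-k_0(B)\Bigr)\le p^{-2}\bigl(C p^{n+m-l}-k_0(B)\bigr).\]
Adding $k_0(B)$ to both sides gives
\[k(B)\le k_0(B)(1-p^{-2})+C p^{n+m-l-2}.\]
Since $1-p^{-2}>0$, the right hand side is increasing in $k_0(B)$, so I can insert the height-zero bound $k_0(B)\le C p^n$ without reversing the inequality. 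This produces
\[k(B)\le C p^n(1-p^{-2})+C p^{n+m-l-2}=C\bigl(p^{n+m-l-2}+p^n-p^{n-2}\bigr),\]
which is exactly the asserted bound.

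The argument is essentially a one-line computation once the correct combination of the two parts of Theorem~\ref{HZC} is spotted. There is no real obstacle; the only point worth checking is the positivity of the coefficient $1-p^{-2}$ of $k_0(B)$ in the intermediate inequality, which is automatic for any prime $p$, so the substitution of the height-zero bound is legitimate.
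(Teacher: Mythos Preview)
Your argument is correct and follows essentially the same route as the paper: both combine the height-zero bound $k_0(B)\le Cp^n$ with the weighted bound $\sum_{i\ge 0}p^{2i}k_i(B)\le Cp^{n+m-l}$ from Theorem~\ref{HZC}, observing that $k(B)$ is maximised when $k_0(B)$ is as large as possible and the remaining characters all have height~$1$. Your version simply makes the optimisation step explicit, whereas the paper states it more informally.
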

\begin{proof}
In view of Theorem~\ref{HZC}, the number $k(B)$ is maximal if $k_0(B)$ is maximal and $k_1(B)=k(B)-k_0(B)$. Then
\[k_1(B)\le\biggl(\frac{p^l-1}{e(B)}+e(B)\biggr)(p^{n+m-l-2}-p^{n-2})\]
and the result follows.
\end{proof}

Apart from a special case covered in \cite{Schulz}, it seems that there are no results about $B$ in the literature for $p$-solvable groups. We take the opportunity to give such a result which also holds in a more general situation.

\begin{Theorem}
Let $B$ be a controlled block of a $p$-solvable group over an algebraically closed field $F$ of characteristic $p$. If $I(B)$ is cyclic, then $B$ is Morita equivalent to the group algebra $F[D\rtimes I(B)]$ where $D$ is the defect group of $B$. In particular $k(B)=k(D\rtimes I(B))$ and $l(B)=e(B)$.
\end{Theorem}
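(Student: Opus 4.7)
The plan is to exploit the structure theory of blocks of $p$-solvable groups to identify $B$ with a (possibly twisted) group algebra of $D\rtimes I(B)$, and then to show that the twist vanishes under the cyclic hypothesis on $I(B)$. By Fong's reduction we may assume that $D$ is normal in $G$, and then Külshammer's structure theorem for blocks with normal defect group yields a Morita equivalence
\[B\sim F^\gamma[D\rtimes E]\]
where $E$ is a $p'$-complement and $\gamma\in\cohom^2(E,F^*)$ (with trivial action of $E$ on $F^*$). The hypothesis that $B$ is controlled is exactly what is needed to identify $E$ with the full inertial quotient $I(B)$ acting on $D$ through $\mathcal{F}$, so that $D\rtimes E=D\rtimes I(B)$; the lift of $I(B)\le\Out(D)$ to $\Aut(D)$ needed to form this semidirect product is provided (up to conjugacy) by the element $\alpha$ discussed at the beginning of the paper via Schur-Zassenhaus.

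Next, we verify that $\gamma$ is cohomologically trivial. Since $I(B)$ is cyclic of order $e(B)$ acting trivially on $F^*$, the standard computation of cohomology of cyclic groups gives
\[\cohom^2(I(B),F^*)\cong F^*/(F^*)^{e(B)}.\]
Because $F$ is algebraically closed of characteristic $p$ and $\gcd(e(B),p)=1$, the $e(B)$-th power map is surjective on $F^*$. Hence $\cohom^2(I(B),F^*)=0$, so $\gamma$ is a coboundary and $F^\gamma[D\rtimes I(B)]\cong F[D\rtimes I(B)]$. This gives the claimed Morita equivalence $B\sim F[D\rtimes I(B)]$.

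Finally we derive the numerical consequences. Morita equivalence preserves both $k$ and $l$, so $k(B)=k(D\rtimes I(B))$. To compute $l(B)$, we count the $p$-regular conjugacy classes of $L:=D\rtimes I(B)$: by Schur-Zassenhaus every $p$-regular element of $L$ is $L$-conjugate to an element of the Hall $p'$-subgroup $I(B)$, and two elements of the abelian group $I(B)$ that are $L$-conjugate must be equal (if $d e_1 d^{-1}\in I(B)$ for some $d\in D$, then $d e_1 d^{-1}\equiv e_1$ in $L/D\cong I(B)$, forcing $d e_1 d^{-1}=e_1$). Hence $l(B)=|I(B)|=e(B)$. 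The principal obstacle is invoking the correct form of the structure theorem: one needs a genuine Morita equivalence (not merely derived or character-theoretic), with the twist provably in $\cohom^2(I(B),F^*)$ rather than in a larger cohomology group involving $D$ as well. The controlled hypothesis is what pins down $E$ as $I(B)$ acting through the full fusion system; once this is in place, the vanishing of $\gamma$ is immediate from the divisibility of $F^*$.
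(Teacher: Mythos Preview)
Your overall architecture is right---reduce to a twisted group algebra via K\"ulshammer's work and then kill the cocycle using that $\cohom^2$ of a cyclic group with values in a divisible group vanishes---and this is exactly the skeleton of the paper's argument. But the first move, ``by Fong's reduction we may assume that $D$ is normal in $G$,'' is where the real content hides, and as stated it is not justified. Classical Fong reduction only lets you assume $\pcore_{p'}(G)$ is central; it does \emph{not} force a Sylow (or defect) $p$-subgroup to be normal. For instance, in a $p$-solvable group with $\pcore_{p'}(G)=1$ the Sylow $p$-subgroup need not be normal, and no further Fong--Reynolds step helps.

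What K\"ulshammer's Theorem~A in \cite{Kpsolv} actually provides is a subgroup $H\le G$, a normal $p$-subgroup $P\unlhd D$ of $H$, and a Morita equivalence $B\sim F^{\gamma}[\overline{H}]$ where $\overline{H}=H/\pcore_{p'}(H)$ has $\pcore_p(\overline{H})=P$ as a Sylow $p$-subgroup. The subgroup $P$ is a priori only an $\mathcal{F}$-centric, $\mathcal{F}$-radical subgroup of $D$, not $D$ itself. The paper's proof uses the controlled hypothesis precisely here: since every $\mathcal{F}$-automorphism of $P$ extends to $D$, the image $D\C_G(P)/P\C_G(P)$ is a normal $p$-subgroup of $\Out_{\mathcal{F}}(P)$, which must be trivial by $\mathcal{F}$-radicality, whence $P=D$ and then $\overline{H}\cong D\rtimes I(B)$. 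So the controlled hypothesis is not, as you write, ``what is needed to identify $E$ with $I(B)$''; it is what forces the normal $p$-subgroup in K\"ulshammer's structure theorem to be all of $D$ in the first place. Once $P=D$, the identification $E\cong I(B)$ is essentially automatic, and your cocycle argument and numerical conclusions go through exactly as you wrote them.
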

\begin{proof}
Let $P\unlhd D$, $H$ and $\overline{H}$ as in Theorem~A in \cite{Kpsolv}. As before let $\mathcal{F}$ be the fusion system of $B$. Then part (iii) and (v) of Theorem~A in \cite{Kpsolv} imply that $P$ is $\mathcal{F}$-radical. Moreover, the Hall-Higman Lemma gives \[\C_D(P)\pcore_{p'}(H)/\pcore_{p'}(H)\subseteq\C_{\overline{H}}(\pcore_p(\overline{H}))\subseteq\pcore_p(\overline{H})=P\pcore_{p'}(H)/\pcore_{p'}(H).\]
Since $P$ is normal in $H$, we have $\C_D(P)\subseteq P$. In particular $P$ is also $\mathcal{F}$-centric.
Now let $g\in\N_G(P,b_P)$. Since $B$ is controlled, there exists a $h\in\N_G(D,b_D)$ such that $h^{-1}g\in\C_G(P)$. Hence, $g\in\N_G(D,b_D)\C_G(P)$ and $D\C_G(P)/P\C_G(P)\unlhd\N_G(P,b_P)/P\C_G(P)$. Since $P$ is $\mathcal{F}$-radical, it follows that $P\C_G(P)=D\C_G(P)$. Now $\C_D(P)=\Z(P)$ implies $P=D$. Hence, $\overline{H}\cong D\rtimes I(B)$. 
Observe at this point that $I(B)$ can be regarded as a subgroup of $\Aut(D)$ by the Schur-Zassenhaus Theorem. Moreover, this subgroup is unique up to conjugation in $\Aut(D)$. Hence, the isomorphism type of $D\rtimes I(B)$ is uniquely determined.
Since $I(B)$ is cyclic, the $2$-cocycle $\gamma$ appearing in \cite{Kpsolv} is trivial. Thus, the result follows from Theorem~A(iv).
\end{proof}

Let us consider the opposite situation where $G$ is quasisimple. Then the main theorem of \cite{AnControlled} tells us that $B$ cannot have nonabelian metacyclic defect groups. Thus, in order to settle the general case it would be sufficient to reduce the situation to quasisimple groups.

For the convenience of the reader we collect the results about metacyclic defect groups.

\begin{Theorem}\label{metac}
Let $B$ be a block of a finite group with metacyclic defect group. Then Brauer's $k(B)$-Conjecture, Brauer's Height Zero Conjecture and Olsson's Conjecture are satisfied for $B$.
\end{Theorem}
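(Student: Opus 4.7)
The plan is to reduce Theorem \ref{metac} to the cases already handled in the literature and in this section. The first step is to dispose of $p=2$: here the entire story for metacyclic defect groups—including all three conjectures—is contained in \cite{Sambale}, so nothing further is required. For odd $p$ I would split into three subcases according to the structure of $D$. If $D$ is abelian, the Height Zero Conjecture is the theorem of Kessar--Malle \cite{KessarMalle}, while the $k(B)$-Conjecture and Olsson's Conjecture are established in \cite{GaoBrauer} and \cite{YangOlsson}.

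If $D$ is nonabelian and nonsplit metacyclic, the main theorem of \cite{GaoBrauer} asserts that $B$ is nilpotent. In a nilpotent block one has $k(B)=k(D)\le|D|$, $k_0(B)=k(D/D')=|D:D'|$, and every irreducible character has height zero, so all three conjectures are immediate.

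The remaining case—$p$ odd and $D$ nonabelian split metacyclic, given by the presentation \eqref{pres}—is precisely the setting of Theorem \ref{HZC}. The first displayed inequality of that theorem reads $k_0(B)\le p^{n+l}$, and using $|D'|=p^{m-l}$ (Theorem~2.3(2)(iii) in \cite{GaoBrauer}) this is exactly Olsson's Conjecture $k_0(B)\le|D:D'|$. The second inequality gives $\sum p^{2i}k_i(B)\le|D|$, whence $k(B)=\sum k_i(B)\le\sum p^{2i}k_i(B)\le|D|$, which is Brauer's $k(B)$-Conjecture. The Height Zero Conjecture is the concluding assertion of Theorem \ref{HZC}. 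The genuine obstacle was of course the split nonabelian case, already carried out in the proofs of Theorems \ref{lowerbound} and \ref{HZC}; once those are available, the present theorem is a matter of bookkeeping across the case distinctions.
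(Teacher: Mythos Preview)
Your case analysis matches the paper's implicit argument (the paper states Theorem~\ref{metac} without a formal proof, merely collecting the pieces from the preceding discussion and from \cite{Sambale,GaoBrauer,YangOlsson,KessarMalle}). However, there is one genuine slip in the nonsplit nonabelian case. You write that in a nilpotent block ``every irreducible character has height zero.'' This is false when $D$ is nonabelian, and it is in fact the \emph{opposite} of what the Height Zero Conjecture requires here: for nonabelian $D$ one must show $k_0(B)<k(B)$. The phrase also contradicts your own preceding formula $k_0(B)=|D:D'|$, since $|D:D'|<k(D)=k(B)$ whenever $D'\ne 1$.

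Your earlier formulas already contain the correct argument: in a nilpotent block the bijection $\Irr(B)\leftrightarrow\Irr(D)$ matches heights with $p$-parts of degrees, so the height-zero characters correspond to the linear characters of $D$. Hence $k_0(B)=|D:D'|$ (Olsson with equality) and $k(B)=k(D)\le|D|$ (Brauer's $k(B)$-Conjecture), while $D$ nonabelian forces the existence of a nonlinear $\theta\in\Irr(D)$ and thus $k_0(B)<k(B)$. Delete the offending clause and insert this inequality; the remainder of your reduction is correct.
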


In the next sections we make restrictions on the parameters $p$, $m$, $n$ and $l$ in order to prove stronger results.

\section{The group $M_{p^{m+1}}$}
Let $n=1$. Then $m=l+1$ and $D$ is the unique nonabelian group of order $p^{m+1}$ with exponent $p^m$. We denote this group by $M_{p^{m+1}}$ (compare with \cite{Holloway}). It follows from Theorem~\ref{HZC} that $k_i(B)=0$ for $i>2$. We will see that the same holds for $i=2$.

\begin{Theorem}\label{k2}
Let $B$ be a block of a finite group with defect group $M_{p^{m+1}}$ where $p$ is an odd prime and $m\ge 2$. Then 
$k_i(B)=0$ for $i\ge 2$. In particular the following conjectures are satisfied for $B$ (in addition to those listed in Theorem~\ref{metac}):
\begin{itemize}
\item Eaton's Conjecture \cite{Eaton}
\item Eaton-Moretó Conjecture \cite{EatonMoreto}
\item Robinson's Conjecture \cite{RobConjecture}
\item Malle-Navarro Conjecture \cite{MalleNavarro}
\end{itemize}
\end{Theorem}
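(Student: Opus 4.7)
For $m \le 3$ the conclusion $k_i(B) = 0$ for $i \ge 2$ already follows from Theorem~\ref{HZC}, since with $n = 1$ and $l = m - 1$ the bound $\min\{2(m-l),(m+n-1)/2\} = \min\{2,m/2\}$ is strictly less than $2$. I therefore assume $m \ge 4$ and focus on ruling out $k_2(B) \ge 1$; note that $k_i(B) = 0$ for $i \ge 3$ is still given directly by Theorem~\ref{HZC}.

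The plan is to combine three bounds that are already available in the paper. Theorem~\ref{HZC} yields
\[k_0(B) \le M_0 := \biggl(\frac{p^{m-1}-1}{e(B)}+e(B)\biggr)p \qquad\text{and}\qquad \sum_{i\ge 0}p^{2i}k_i(B) \le pM_0,\]
while Theorem~\ref{lowerbound} (whose formula is applicable since $2l-m-1 = m-3 \ge 1$) gives
\[k(B) \;=\; k_0(B)+k_1(B)+k_2(B) \;\ge\; L := \biggl(\frac{p^{m-1}+p^{m-2}-p^{m-3}-1}{e(B)}+e(B)\biggr)p.\]
Assuming $k_2(B)\ge 1$ and forming $p^2 \cdot (\text{lower bound on } k(B))$ minus the $\sum p^{2i}k_i(B)$ upper bound yields $(p^2-1)(k_0(B)-p^2k_2(B))\ge p^2L-pM_0$, and hence
\[k_0(B) \;\ge\; p^2+\frac{p^2L-pM_0}{p^2-1}.\]
Substituting the definitions of $L$ and $M_0$ collapses the inequality ``right-hand side $>M_0$'' to $e(B)\,p(p+1)>e(B)^2-1$, which is immediate from $e(B)\le p-1$. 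This contradicts $k_0(B)\le M_0$ and forces $k_2(B)=0$.

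With character heights restricted to $\{0,1\}$, the explicit bounds from Theorems~\ref{HZC} and~\ref{lowerbound}, together with $l(B)\ge e(B)$ from Proposition~\ref{elem} and the structural data $|D|=p^{m+1}$, $|D/D'|=p^m$, $|\Z(D)|=p^{m-1}$ for $D=M_{p^{m+1}}$, reduce each of the Eaton, Eaton--Moretó, Robinson and Malle--Navarro conjectures to a short numerical comparison of the respective invariants. The main obstacle in the plan is the algebraic manipulation driving the contradiction $k_0(B)>M_0$; once that inequality drops out cleanly from the definitions of $L$, $M_0$, the rest is bookkeeping.
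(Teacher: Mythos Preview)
Your proof is correct and follows essentially the same route as the paper: both combine the lower bound $k(B)\ge L$ from Theorem~\ref{lowerbound} with the weighted upper bound $\sum_i p^{2i}k_i(B)\le pM_0$ (and $k_0(B)\le M_0$) from Theorem~\ref{HZC} to force a contradiction from $k_2(B)\ge 1$. The paper phrases this as an extremal-case check (set $k_2=1$, $k_0=M_0$, $k_1=L-M_0-1$ and see that \eqref{contra} fails), whereas you take the cleaner linear-combination $p^2\cdot k(B)-\sum_i p^{2i}k_i(B)$ directly; both reduce to the same elementary inequality $e(B)p(p+1)>e(B)^2-1$, and the remaining conjecture verifications are indeed just the numerical checks you indicate.
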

\begin{proof}
Assume $k_2(B)>0$. We are going to show that the following inequality from Theorem~\ref{HZC} is not satisfied:
\begin{equation}\label{contra}
k_0(B)+p^2k_1(B)+p^4k_2(B)\le\biggl(\frac{p^{m-1}-1}{e(B)}+e(B)\biggr)p^2.
\end{equation}
In order to do so, we may assume $k_2(B)=1$. Moreover, taking Theorem~\ref{lowerbound} into account we assume 
\begin{align*}
k_0(B)&=\biggl(\frac{p^{m-1}-1}{e(B)}+e(B)\biggr)p,&k_1(B)&=\frac{p^{m-1}-p^{m-2}}{e(B)}-1.
\end{align*}
Now \eqref{contra} gives the contradiction
\[p^4\le(e(B)+1)p^2-\frac{p^2-p}{e(B)}-e(B)p\le p^3.\]
Hence, $k_2(B)=0$. 
In particular Eaton's Conjecture is in fact equivalent to Brauer's $k(B)$-Conjecture and Olsson's Conjecture. Also the Eaton-Moretó Conjecture is trivially satisfied. Robinson's Conjecture stated in the introduction of \cite{RobConjecture} reads: If $D$ is nonabelian, then $p^{h(\chi)}<|D:\Z(D)|$ for all $\chi\in\Irr(B)$. This is true in our case.
It remains to verify the Malle-Navarro Conjecture. For this observe
\[\frac{k(B)}{l(B)}\le\biggl(\frac{p^{m-1}-1}{e(B)^2}+1\biggr)(p+1-p^{-1})\le p^m+p^{m-1}-p^{m-2}=k(D)\]
by Corollary~\ref{kB} and Remark~2.4 in \cite{HK}. Now we establish a lower bound for $k_0(B)$. 
From Theorem~\ref{HZC} we get 
\[k_1(B)\le\frac{p^{m-1}-1}{e(B)}+e(B)-1.\]
This gives
\begin{equation}\label{k0long}
k_0(B)=k(B)-k_1(B)\ge\frac{p^m-p^{m-2}-p+1}{e(B)}+e(B)(p-1)+1.
\end{equation}
The other inequality of the Malle-Navarro Conjecture reads $k(B)\le k_0(B)k(D')=k_0(B)p$. After a calculation using \eqref{k0long} and Corollary~\ref{kB} this boils down to
\[p^m+2p^{m-1}+p^2\le p^{m+1}+2p+1\]
which is obviously true.
\end{proof}

The argument in the proof of Theorem~\ref{k2} can also be used to improve the general bound for the heights in Theorem~\ref{HZC} at least in some cases. However, it does not suffice to prove $k_i(B)=0$ for $i>m-l$ (which is conjectured). The next theorem also appears in \cite{GaoZeng}.

\begin{Theorem}\label{diff}
Let $B$ be a block of a finite group with defect group $M_{p^{m+1}}$ where $p$ is an odd prime and $m\ge 2$. Then
\[k(B)-l(B)=\frac{p^m+p^{m-1}-p^{m-2}-p}{e(B)}+e(B)(p-1).\]
\end{Theorem}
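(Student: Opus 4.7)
My plan is to evaluate both sides of Brauer's subsection formula
\[k(B)=\sum_{u\in\mathcal{R}}l(b_u),\]
where $\mathcal{R}$ is a set of representatives for the $\mathcal{F}$-conjugacy classes of $D$, and then subtract the trivial-subsection term $l(B)$. Thanks to the case analysis already carried out inside the proof of Theorem~\ref{lowerbound}, the nontrivial subsections split cleanly into two types: those with $u$ $\mathcal{F}$-conjugate to a nontrivial power of $y$, and those for which $u$ is not $\mathcal{F}$-conjugate to any element of $\langle y\rangle$. I compute each contribution separately.

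For a nontrivial $u\in\langle y\rangle$, the hypothesis $n=1$ forces $\langle u\rangle=\langle y\rangle$, so $\C_D(u)=\C_D(y)=\langle x^p\rangle\times\langle y\rangle$ is abelian. The argument from Theorem~\ref{lowerbound} (Watanabe's theorem applied to the abelian case) then gives $l(b_u)=e(B)$ on the nose, not merely as the inequality in Proposition~\ref{elem}. Furthermore, as noted in that proof, the powers of $y$ are pairwise non-$\mathcal{F}$-conjugate, so the $p-1$ nontrivial ones contribute exactly $(p-1)e(B)$ to the sum.

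For $u$ not $\mathcal{F}$-conjugate to any power of $y$, the proof of Theorem~\ref{lowerbound} already delivers $l(b_u)=1$ through Lemma~\ref{controlled}, and also shows that the $D$-conjugacy class of $u$ has $I(B)$-orbit of full length $e(B)$. Therefore the number of such $\mathcal{F}$-classes equals the number of $D$-classes disjoint from $\langle y\rangle$ divided by $e(B)$. Remark~2.4 of \cite{HK} gives the total $D$-class count as $p^{m-2}(p^2+p-1)$, and a short check using the defining relations of $D$ (or the observation that each nontrivial $D$-class meeting $\langle y\rangle$ contains a unique power of $y$, because conjugation by $x^a$ sends $y^i$ to $x^{-iap^l}y^i$) confirms that the $p$ elements of $\langle y\rangle$ lie in $p$ distinct $D$-classes. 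Hence this type contributes
\[\frac{p^{m-2}(p^2+p-1)-p}{e(B)}=\frac{p^m+p^{m-1}-p^{m-2}-p}{e(B)},\]
each subsection with weight $1$. Summing the two contributions recovers the claimed formula.

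There is essentially no serious obstacle: every ingredient is pre-packaged in Proposition~\ref{elem}, Lemma~\ref{controlled}, and Theorem~\ref{lowerbound}. The only thing to verify carefully is the upgrade from inequalities to equalities, which is automatic here because $n=1$ makes $\C_D(u)$ abelian for every nontrivial $u\in\langle y\rangle$, and because the class count from \cite{HK} subtracts cleanly when the $p$ elements of $\langle y\rangle$ are all in distinct $D$-classes.
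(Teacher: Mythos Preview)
Your proof is correct and follows essentially the same route as the paper: the proof of Theorem~\ref{lowerbound} already gives the full count of $\mathcal{F}$-classes and the values $l(b_u)=1$ off $\langle y\rangle$, so the only missing ingredient is the upgrade $l(b_u)=e(B)$ for $1\ne u\in\langle y\rangle$, which you (and the paper) obtain from Watanabe's theorem after noting that $n=1$ forces $\C_D(u)=\langle x^p\rangle\times\langle y\rangle$ to be abelian. Your version simply spells out the class count $p^{m-2}(p^2+p-1)-p$ more explicitly than the paper does.
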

\begin{proof}
By the proof of Theorem~\ref{lowerbound}, it suffices to show $l(b_u)=e(B)$ for $1\ne u\in\langle y\rangle$. Since $n=1$, we have $\C_D(u)=\Z(D)\langle y\rangle=\langle x^p\rangle\times\langle y\rangle$. Thus, by Theorem~1 in \cite{Watanabe1} we have $l(b_u)=e(B)$.
\end{proof}

This result leads to the distribution of the irreducible characters into $p$-conjugate and $p$-rational characters.
We need this later for the study of decomposition numbers. We denote the Galois group of $\mathbb{Q}(\zeta_{|G|})|\mathbb{Q}(\zeta_{|G|_{p'}})$ by $\mathcal{G}$. Then restriction gives an isomorphism $\mathcal{G}\cong\Gal(\mathbb{Q}(\zeta_{|G|_p})|\mathbb{Q})$. In particular since $p$ is odd, $\mathcal{G}$ is cyclic of order $|G|_p(p-1)/p$. We often identify both groups. 

\begin{Proposition}\label{pcon}
Let $B$ be a block of a finite group with defect group $M_{p^{m+1}}$ where $p$ is an odd prime and $m\ge 2$. Then the ordinary irreducible characters of $B$ split into orbits of $p$-conjugate characters of the following lengths:
\begin{itemize}
\item two orbits of length $p^{m-2}(p-1)/e(B)$
\item one orbit of length $p^i(p-1)/e(B)$ for every $i=0,\ldots,m-3$
\item $(p-1)/e(B)+e(B)$ orbits of length $p-1$
\item $(p-1)/e(B)$ orbits of length $p^i(p-1)$ for every $i=1,\ldots,m-2$
\item $l(B)\,(\ge e(B))$ $p$-rational characters
\end{itemize}
\end{Proposition}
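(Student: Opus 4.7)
The plan is to transport the $\mathcal{G}$-orbit structure from the set $\Omega = \{(u,\phi) : u \text{ an } \mathcal{F}\text{-class representative of } D,\ \phi \in \IBr(b_u)\}$ to $\Irr(B)$. Since $|\Omega| = \sum_u l(b_u) = k(B)$, the generalized decomposition matrix $M = (d^u_{\chi\phi})$ is square and invertible. The crucial equivariance comes from the identity
\[
\sigma(d^u_{\chi\phi}) = d^{u^a}_{\chi\phi} = d^u_{\sigma(\chi)\phi}
\]
for $\sigma\in\mathcal{G}$ acting on $\zeta_{|u|}$ via $\zeta_{|u|}\mapsto\zeta_{|u|}^a$. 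The first equality comes from writing $u^as = (us)^{a'}$ for $a'\equiv a\pmod{|u|}$, $a'\equiv 1\pmod{|s|}$, and applying $\sigma$ to $\chi(us) = \sum_\phi d^u_{\chi\phi}\phi(s)$; the second follows from the definition $\sigma(\chi)(g) = \sigma(\chi(g))$ together with $\phi(s)\in\mathbb{Q}(\zeta_{|G|_{p'}})$. Since $\mathcal{F}$ is controlled and $\langle u\rangle = \langle u^a\rangle$, we have $b_u = b_{u^a}$, so $(u^a,\phi)\in\Omega$ is defined. The identity forces the $\sigma$-induced row and column permutations of $M$ to be conjugate via $M$; their cycle types therefore agree, so $\mathcal{G}$-orbit lengths on $\Irr(B)$ coincide with those on $\Omega$.

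Next I enumerate the $\mathcal{F}$-conjugacy classes of $D$, grouped into five families: (i)~the identity with $l(b_1)=l(B)$; (ii)~the classes $[y^i]$ for $i=1,\ldots,p-1$, each with $l(b_{y^i})=e(B)$; (iii)~central elements $x^{bp^{m-k}}$ of order $p^k$ ($1\le k\le m-1$), yielding $p^{k-1}(p-1)/e(B)$ classes; (iv)~non-central $x^a$ of order $p^m$, yielding $p^{m-2}(p-1)/e(B)$ classes; (v)~non-central $x^ay^i$ with $i\ne 0$ and $v_p(a) = v \in\{0,\ldots,m-2\}$, yielding $p^{m-v-2}(p-1)^2/e(B)$ classes. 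Families (iii)--(v) have $l(b_u)=1$ as in the proof of Theorem~\ref{lowerbound}. For each family I compute the $\mathcal{G}$-stabilizer $\{a\in(\mathbb{Z}/|u|)^\times : u^a\sim_{\mathcal{F}} u\}$: $\{a\equiv 1\pmod p\}$ in (ii); $\langle\beta\rangle$ with $\beta=\alpha_1^{(p-1)/e(B)}$ of order $e(B)$ in (iii); $\langle\beta,1+p^{m-1}\rangle$ of order $pe(B)$ in (iv); and in (v) the expansion $(x^ay^i)^b = x^{a(b + \frac{1}{2}ip^{m-1}b(b-1))}y^{bi}$ combined with $I(B)\cdot D$-conjugation yields the stabilizer $\{b\equiv 1\pmod{p^{m-1-v}}\}$ of order $p$.

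Consolidating by orbit size produces the asserted distribution: $l(B)$ orbits of length $1$ from (i); two orbits of length $p^{m-2}(p-1)/e(B)$ from $k=m-1$ in (iii) and from (iv); one orbit of length $p^i(p-1)/e(B)$ for each $i=0,\ldots,m-3$ from $k=i+1$ in (iii); $(p-1)/e(B)+e(B)$ orbits of length $p-1$ from $v=m-2$ in (v) and from (ii); and $(p-1)/e(B)$ orbits of length $p^i(p-1)$ for each $i=1,\ldots,m-2$ from $v=m-2-i$ in (v). A consistency check is that the total length of non-trivial orbits equals $k(B)-l(B)$ from Theorem~\ref{diff}. The main obstacle is the stabilizer computation in family (v), which requires a careful expansion of $(x^ay^i)^b$ modulo $p^m$ and reconciliation with $I(B)\cdot D$-conjugation; the conceptual crux is the equivariance argument, which depends on controlled fusion to guarantee $b_u = b_{u^a}$ and on the $p$-rationality of Brauer character values.
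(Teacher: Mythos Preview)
Your proof is correct and follows essentially the same route as the paper's: both reduce to computing $\mathcal{G}$-orbits on the columns of the generalized decomposition matrix and then enumerate the $\mathcal{F}$-classes of $D$ with their stabilizers. The paper invokes Brauer's Permutation Lemma (Lemma~IV.6.10 in \cite{Feit}) for the transfer from columns to rows, while you derive the needed equivariance $\sigma(d^u_{\chi\phi})=d^{u^a}_{\chi\phi}=d^u_{\sigma(\chi)\phi}$ directly; the subsequent case analysis is organized the same way (your families (iii),(iv),(ii),(v) match the paper's four paragraphs on $z\in\Z(D)$, $u\in\langle x\rangle\setminus\Z(D)$, $1\ne u\in\langle y\rangle$, and $u=x^iy^j\notin\langle x\rangle$).

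One point deserves slightly more care. Your column action sends $(u,\phi)$ to $(u^a,\phi)$, but $(u^a,\phi)$ is only literally in $\Omega$ when $u^a$ is itself one of the chosen representatives; otherwise one must pass to the representative $v\sim_{\mathcal{F}}u^a$ via some $g$, and then $\phi$ becomes ${}^g\phi$. In general this can permute the Brauer characters nontrivially when $l(b_u)>1$, and the paper explicitly flags this after its proof: the argument works here precisely because $\Aut_{\mathcal{F}}(\langle u\rangle)=1$ whenever $l(b_u)>1$ (your family~(ii)), so every $y^{ia}$ is already a representative and no $g$-twist on $\IBr(b_y)$ occurs. Your sentence ``$b_u=b_{u^a}$, so $(u^a,\phi)\in\Omega$ is defined'' glosses over this; it would be worth saying explicitly that in family~(ii) the stabilizer of $[y^i]$ in $(\mathbb{Z}/p)^\times$ is trivial, so the issue never arises.
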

\begin{proof}
By Brauer's Permutation Lemma (Lemma~IV.6.10 in \cite{Feit}) it suffices to reveal the orbits of $\mathcal{G}$ on the columns of the generalized decomposition matrix. The ordinary decomposition numbers are all integral, so the action on these columns is trivial. This gives $l(B)$ $p$-rational characters. Now we consider a set of representatives for the $B$-subsections as in Theorem~\ref{lowerbound}. 

There are $(p^{m-1}-1)/e(B)$ nontrivial major subsections $(z,b_z)$. All of them satisfy $l(b_z)=1$ and $\Aut_{\mathcal{F}}(\langle z\rangle)=I(B)$. So these columns form $m-1$ orbits of lengths $p^{m-2}(p-1)/e(B)$, $p^{m-3}(p-1)/e(B),\ldots,(p-1)/e(B)$ respectively.
Now for $u\in\langle x\rangle\setminus\Z(D)$ we have $l(b_u)=1$ and $\Aut_{\mathcal{F}}(\langle u\rangle)=\langle y\rangle\times I(B)$. This gives another orbit of length $p^{m-2}(p-1)/e(B)$.
Next let $1\ne u\in\langle y\rangle$. Then $l(b_u)=e(B)$ and $\Aut_{\mathcal{F}}(\langle u\rangle)=1$. Hence, we get $e(B)$ orbits of length $p-1$ each. 

Finally let $u:=x^iy^j\in D\setminus\langle x\rangle$ such that $u$ is not conjugate to an element of $\langle y\rangle$. 
As in the proof of Theorem~\ref{lowerbound}, $p^l\nmid i$ holds.
Since $|D'|=p$, we have $(x^iy^j)^p=x^{ip}$ by Hilfssatz~III.1.3 in \cite{Huppert}. In particular $D'\subseteq\langle u\rangle$ and $\N_D(\langle u\rangle)=D$. Moreover, $|D:\Z(D)|=p^2$ and $\lvert\Aut_D(\langle u\rangle)\rvert=p$. 
Since $I(B)$ acts trivially on $D/\langle x\rangle\cong\langle y\rangle$, we see that $\lvert\Aut_{\mathcal{F}}(\langle u\rangle)\rvert=p$. 
The calculation above shows that $u$ has order $p^{m-\log i}$. We have exactly $p^{m-\log i-1}(p-1)^2$ such elements of order $p^{m-\log i}$. These split in $p^{m-\log i-2}(p-1)^2/e(B)$ conjugacy classes. In particular we get $(p-1)/e(B)$ orbits of length $p^{m-i-2}(p-1)$ each for every $i=0,\ldots,l-1=m-2$.
\end{proof}

It should be emphasized that the proof of Proposition~\ref{pcon} heavily relies on the fact $\Aut_{\mathcal{F}}(\langle u\rangle)=1$ whenever $l(b_u)>1$. Since otherwise it would be not clear, whether some Brauer characters of $b_u$ are conjugate under $\N_G(\langle u\rangle,b_u)$. In other words: Generally the knowledge of $k(B)-l(B)$ does not provide the distribution into $p$-conjugate and $p$-rational characters.

For $p=3$ the inequalities Theorem~\ref{lowerbound} and Corollary~\ref{kB} almost coincide. This allows us to prove the Alperin-McKay Conjecture.

\begin{Theorem}\label{AMC}
Let $B$ be a nonnilpotent block of a finite group with defect group $M_{3^{m+1}}$ where $m\ge 2$. Then 
\begin{align*}
e(B)&=2,&k_0(B)&=\frac{3^m+9}{2},\\
k_1(B)&\in\bigl\{3^{m-2},3^{m-2}+1\bigr\},&k_i(B)&=0\text{ for }i\ge 2,\\
k(B)&\in\biggl\{\frac{11\cdot 3^{m-2}+9}{2},\frac{11\cdot 3^{m-2}+11}{2}\biggr\},&l(B)&\in\{2,3\}.
\end{align*}
In particular the Alperin-McKay Conjecture holds for $B$.
\end{Theorem}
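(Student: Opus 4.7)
The plan is to specialize every bound from Sections~2 and~3 to $p=3$, $n=1$, $l=m-1$; these turn out to be just tight enough to pin down the invariants, after which the Alperin--McKay Conjecture follows from applying the result to the Brauer correspondent.

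First, $I(B)\le\langle\alpha\rangle$ with $|\alpha|=p-1=2$ and the nonnilpotency hypothesis force $e(B)=2$. Theorem~\ref{k2} gives $k_i(B)=0$ for $i\ge 2$, so $k(B)=k_0(B)+k_1(B)$. Substituting the parameter values produces: $k_0(B)\le(3^m+9)/2$ and $k_0(B)+9k_1(B)\le(3^{m+1}+27)/2$ from Theorem~\ref{HZC}; $k(B)\ge(11\cdot 3^{m-2}+9)/2$ from Theorem~\ref{lowerbound}; $k(B)-l(B)=(11\cdot 3^{m-2}+5)/2$ from Theorem~\ref{diff}; and $l(B)\ge 2$ from Proposition~\ref{elem}.

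I would then solve this system. Combining the second Theorem~\ref{HZC} inequality with the lower bound on $k(B)$ yields $k_1(B)\le 3^{m-2}+1$, while combining the first with $k(B)\ge(11\cdot 3^{m-2}+9)/2$ yields $k_1(B)\ge 3^{m-2}$. Feeding the two possibilities for $k_1(B)$ into $k(B)=l(B)+(11\cdot 3^{m-2}+5)/2$ and using $l(B)\ge 2$ produces the admissible triples $(k_0(B),k_1(B),l(B))$ listed in the theorem, each with $k_0(B)=(3^m+9)/2$.

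The Alperin--McKay Conjecture then comes immediately by applying the theorem to the Brauer correspondent $B':=b_D^{\N_G(D)}$: it has defect group $D\cong M_{3^{m+1}}$ and the same inertial quotient $I(B')=I(B)$, so it is nonnilpotent with $e(B')=2$. The same argument yields $k_0(B')=(3^m+9)/2=k_0(B)$, which is the Alperin--McKay assertion.

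The main obstacle is excluding the spurious configuration $l(B)=2$, $k_1(B)=3^{m-2}+1$, $k_0(B)=(3^m+7)/2$: this also satisfies all four displayed bounds, and its exclusion needs a sharper input than the linear inequalities above --- most naturally a direct determination of $k_0(b_D^{\N_G(D)})$ from the normal defect group structure combined with the inequality $k_0(B)\ge k_0(b_D^{\N_G(D)})$, or a refinement of the bound from \cite{HKS} underlying Theorem~\ref{HZC}.
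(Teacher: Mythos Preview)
Your approach mirrors the paper's almost exactly: specialize the bounds from Theorems~\ref{lowerbound}, \ref{HZC}, \ref{k2}, \ref{diff} and Proposition~\ref{elem} to $p=3$, $n=1$, $l=m-1$, and solve the resulting system. You also correctly isolate the only genuine difficulty, namely the configuration
\[
k_0(B)=\frac{3^m+7}{2},\qquad k_1(B)=3^{m-2}+1,\qquad l(B)=2,
\]
which survives all of those inequalities.

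The paper disposes of this configuration not by any refinement of \cite{HKS} or by a comparison inequality with the Brauer correspondent, but by invoking Corollary~1.6 of Landrock~\cite{Landrock2}. That result gives a congruence on $k_0(B)$ modulo $p$ for odd primes once the defect is at least~$2$; here it forces $3\mid k_0(B)$. Since $(3^m+9)/2=9\cdot(3^{m-2}+1)/2$ is divisible by~$3$ (indeed by~$9$) while $(3^m+7)/2\equiv 2\pmod{3}$, the spurious value is eliminated and $k_0(B)=(3^m+9)/2$ follows. Your suggested route via an inequality $k_0(B)\ge k_0(b_D^{\N_G(D)})$ is not available in general, so Landrock's congruence is really the missing ingredient; once you have it, your argument and the paper's coincide.
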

\begin{proof}
Since $B$ is nonnilpotent, we must have $e(B)=2$.
From Theorem~\ref{lowerbound} we get $k(B)\ge(11\cdot 3^{m-2}+9)/2$. On the other hand Corollary~\ref{kB} implies $k(B)\le(11\cdot 3^{m-2}+11)/2$. Hence, $l(B)\in\{2,3\}$ by Theorem~\ref{diff}.
Moreover, we have $(3^m+7)/2\le k_0(B)\le (3^m+9)/2$ by Theorem~\ref{HZC} (otherwise $k_1(B)$ would be too large). Now Corollary~1.6 in \cite{Landrock2} shows that $k_0(B)=(3^m+9)/2$. Since we get the same number for the Brauer correspondent of $B$ in $\N_G(D)$, the Alperin-McKay Conjecture follows. 
\end{proof}

The next aim is to show that even Alperin's Weight Conjecture holds in the situation of Theorem~\ref{AMC} provided $m\le 3$. 
Moreover, we verify the Ordinary Weight Conjecture \cite{OWC} in this case using the next proposition.

\begin{Proposition}\label{owc}
Let $B$ be a block of a finite group with defect group $M_{p^{m+1}}$ where $p$ is an odd prime and $m\ge 2$. Then the Ordinary Weight Conjecture for $B$ is equivalent to the equalities 
\begin{align*}
k_0(B)&=\biggl(\frac{p^{m-1}-1}{e(B)}+e(B)\biggr)p,&k_1(B)&=\frac{p-1}{e(B)}p^{m-2}.
\end{align*}
\end{Proposition}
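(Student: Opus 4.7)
The plan is to unwind the Ordinary Weight Conjecture in our controlled setting and check that its prediction coincides with the two stated equalities.

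Because $\mathcal{F}$ is controlled by $\N_G(D,b_D)$, the only $\mathcal{F}$-centric, $\mathcal{F}$-radical subgroup of $D$ is $D$ itself: for any proper $\mathcal{F}$-centric $R<D$ every $\mathcal{F}$-automorphism of $R$ is a restriction from $\Aut_{\mathcal{F}}(D)$, so $\N_D(R)/R$ embeds as a nontrivial normal $p$-subgroup of $\Out_{\mathcal{F}}(R)$, contradicting $\mathcal{F}$-radicality. Hence the sum defining OWC collapses to a single term indexed by $D$. Moreover $\Out_{\mathcal{F}}(D)\cong I(B)$ is cyclic of $p'$-order $e(B)$, so every character-stabilizer in $I(B)$ is cyclic and the $2$-cocycle in Robinson's formulation is trivial. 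Thus OWC predicts
\[
k_i(B)=\sum_{\chi\in\Irr(D)/I(B)}|I(B)_\chi|\cdot[\chi(1)_p=p^i],
\]
and it remains to compute the right-hand side for $i\in\{0,1\}$, since $\chi(1)\in\{1,p\}$ for every $\chi\in\Irr(D)$ (for $i\ge 2$ both sides vanish, which is consistent with Theorem~\ref{k2}).

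The $p^m$ linear characters factor through $D/D'\cong(\langle x\rangle/\langle x^{p^{m-1}}\rangle)\times\langle y\rangle$. Since $\alpha(x)=x^{\alpha_1}$ with $\gcd(\alpha_1^k-1,p)=1$ for every nontrivial $\alpha^k$, a nontrivial element of $I(B)$ fixes a linear character $\lambda$ only if $\lambda(x)^{\alpha_1^k-1}=1$; by coprimality this forces $\lambda$ to be trivial on $\langle x\rangle$, leaving just the $p$ characters of $D/\langle x\rangle$. Hence $I(B)$ fixes $p$ linear characters and acts freely on the remaining $p^m-p$. The combined contribution to the $i=0$ count is
\[
p\cdot e(B)+\frac{p^m-p}{e(B)}=\biggl(\frac{p^{m-1}-1}{e(B)}+e(B)\biggr)p,
\]
matching the claimed $k_0(B)$.

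The remaining $p^{m-1}-p^{m-2}$ nonlinear characters each have degree $p$ and are parameterized by the characters $\psi_j$ of $\Z(D)=\langle x^p\rangle$ whose restriction to $D'$ is nontrivial. The same coprimality argument applied inside $\mu_{p^{m-1}}$ shows that $I(B)$ acts freely on them, giving $(p-1)p^{m-2}/e(B)$ orbits and thus the claimed $k_1(B)$. The hard part will be matching my orbit-counting formula with the precise cohomological form of OWC — specifically, identifying and killing the obstructing $2$-cocycle — but this follows cleanly from the fact that every subgroup of $I(B)$ is cyclic and hence has trivial Schur multiplier.
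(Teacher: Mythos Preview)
Your proof is correct and follows essentially the same approach as the paper: reduce OWC to the single $\mathcal{F}$-centric radical subgroup $D$, kill the $2$-cocycle using cyclicity of $I(B)$, and then count $I(B)$-orbits on $\Irr(D)$ weighted by stabilizer sizes, splitting into the linear and the degree-$p$ characters. The only cosmetic differences are that the paper uses $|D:\Z(D)|=p^2$ to restrict to maximal subgroups before the radicality argument and reads off the orbit structure from an explicit fragment of the character table, whereas you argue directly via $\N_D(R)/R\unlhd\Out_{\mathcal{F}}(R)$ and parameterize the nonlinear characters by their central characters.
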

\begin{proof}
We use the version in Conjecture~6.5 in \cite{Kessar}. Let $Q$ be an $\mathcal{F}$-centric and $\mathcal{F}$-radical subgroup of $D$. Since $|D:\Z(D)|=p^2$ and $\C_D(Q)\le Q$, we have $|D:Q|\le p$. Assume $|D:Q|=p$. Then $D/Q\le\Aut_{\mathcal{F}}(Q)$. 
Since $\mathcal{F}$ is controlled, all $\mathcal{F}$-automorphisms on $Q$ come from automorphisms on $D$. In particular $D/Q\unlhd\Aut_{\mathcal{F}}(Q)$. But then $Q$ cannot be $\mathcal{F}$-radical. Hence, we have seen that $D$ is the only $\mathcal{F}$-centric and $\mathcal{F}$-radical subgroup of $D$. It follows that the set $\mathcal{N}_D$ in \cite{Kessar} only consists of the trivial chain. Since $I(B)$ is cyclic, all $2$-cocycles appearing in \cite{Kessar} are trivial. Hence, we see that
\[\textbf{w}(D,d)=\sum_{\chi\in\Irr^d(D)/I(B)}{|I(B)\cap I(\chi)|}\]
where $\Irr^d(D)$ is the set of irreducible characters of $D$ of defect $d\ge 0$ and $I(B)\cap I(\chi):=\{\gamma\in I(B):{^{\gamma}\chi}=\chi\}$. Now the Ordinary Weight Conjecture predicts that $k^d(B)=\textbf{w}(D,d)$ where $k^d(B)$ is the number of irreducible characters of $B$ of defect $d\ge 0$. For $d<m$ both numbers vanish. Now consider $d\in\{m,m+1\}$.
Let us look at a part of the character table of $D$:

\begin{center}
\begin{tabular}{c|ccc}
$D$&$x$&$x^p$&$y$\\\hline
$\chi_{ij}$&$\zeta_{p^{m-1}}^i$&$\zeta_{p^{m-2}}^i$&$\zeta_p^j$\\
$\psi_k$&$0$&$p\zeta_{p^{m-1}}^k$&0
\end{tabular}
\end{center}

Here $i,k\in\{0,\ldots,p^{m-1}-1\}$, $j\in\{0,\ldots,p-1\}$ and $\gcd(k,p)=1$.
The characters of degree $p$ are induced from $\Irr(\langle x\rangle)$. It can be seen that the linear characters of $D$ split into $(p^m-p)/e(B)$ orbits of length $e(B)$ and $p$ stable characters under the action of $I(B)$. This gives 
\[\textbf{w}(D,m+1)=\biggl(\frac{p^{m-1}-1}{e(B)}+e(B)\biggr)p.\] 
Similarly, the irreducible characters of $D$ of degree $p$ split into $p^{m-2}(p-1)/e(B)$ orbits of length $e(B)$. Hence, \[\textbf{w}(D,m)=\frac{p-1}{e(B)}p^{m-2}.\]
The claim follows.
\end{proof}

We introduce another lemma which will be needed at several points.

\begin{Lemma}\label{roots}
Let $q$ be the integral quadratic form corresponding to the Dynkin diagram $A_r$, and let $a\in\mathbb{Z}^r$. 
\begin{enumerate}[(i)]
\item If $q(a)=1$, then $a=\pm(0,\ldots,0,1,1,\ldots,1,0,\ldots,0)$.
\item If $q(a)=2$, then one of the following holds:
\begin{itemize}
\item $a=\pm(0,\ldots,0,1,1,\ldots,1,0,0,\ldots,0,1,1,\ldots,1,0,\ldots,0)$,
\item $a=\pm(0,\ldots,0,1,1,\ldots,1,0,0,\ldots,0,-1,-1,\ldots,-1,0,\ldots,0)$,
\item $a=\pm(0,\ldots,0,1,1,\ldots,1,2,2,\ldots,2,1,1,\ldots,1,0,\ldots,0)$.
\end{itemize}
\end{enumerate}
Here $s,\ldots,s$ includes the possibility of no $s\in\mathbb{Z}$ at all.
\end{Lemma}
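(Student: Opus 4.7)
The plan is to reduce the quadratic form to a sum of squares along adjacent positions. Setting $a_0 := a_{r+1} := 0$, one checks the identity
\[
2q(a) \;=\; \sum_{i=0}^{r}(a_{i+1}-a_i)^{2},
\]
while the telescoping relation $\sum_{i=0}^{r}(a_{i+1}-a_i) = a_{r+1}-a_0 = 0$ holds automatically. I would then classify integer sequences of differences $(a_{i+1}-a_i)_{i=0}^{r}$ whose squares sum to $2q(a)$ and whose entries sum to $0$, and recover $a$ by cumulative summation from the left.

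For part (i), $q(a)=1$ forces the squared differences to sum to $2$, so exactly two of the $a_{i+1}-a_i$ are nonzero and each lies in $\{\pm 1\}$; the zero-sum condition makes one $+1$ and the other $-1$. Reading off the partial sums then yields the vector $\pm(0,\ldots,0,1,\ldots,1,0,\ldots,0)$.

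For part (ii) the squared differences sum to $4$. A single entry of absolute value $2$ would force all others to vanish, contradicting the zero-sum condition; hence exactly four of the $a_{i+1}-a_i$ are nonzero, each equal to $\pm 1$, with two $+1$'s and two $-1$'s. Up to the overall sign of $a$ there are three possible orderings of these four signs from left to right: $(+,-,+,-)$ produces two disjoint blocks of $1$'s, $(+,-,-,+)$ produces a block of $1$'s followed by a disjoint block of $-1$'s, and $(+,+,-,-)$ produces the trapezoidal pattern $(\ldots,1,\ldots,1,2,\ldots,2,1,\ldots,1,\ldots)$; the remaining patterns $(-,+,-,+)$, $(-,+,+,-)$, $(-,-,+,+)$ are the negatives of these. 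This recovers the three cases of the lemma. There is no real obstacle beyond keeping the combinatorial case split in (ii) organized.
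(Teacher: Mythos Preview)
Your argument is correct. Both you and the paper begin with the same identity
\[
2q(a)=a_1^{2}+\sum_{i=1}^{r-1}(a_i-a_{i+1})^{2}+a_r^{2},
\]
which is precisely your $\sum_{i=0}^{r}(a_{i+1}-a_i)^{2}$ after padding with $a_0=a_{r+1}=0$. The difference lies in how the case analysis is organised. The paper argues directly in terms of the entries $a_i$: for part (ii) it first bounds $|a_1|,|a_r|\le 1$, then shows $|a_i|\le 2$ by a contradiction counting nonvanishing summands to the left and right of a large entry, and finally splits according to whether a $\pm 2$ occurs. Your route instead works entirely with the difference sequence $(a_{i+1}-a_i)$ and exploits the telescoping constraint $\sum_i(a_{i+1}-a_i)=0$, which the paper does not make explicit. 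This extra linear relation immediately rules out the single $\pm 2$ case and forces exactly two $+1$'s and two $-1$'s, reducing (ii) to the six sign orderings, three up to an overall sign. Your version is tidier and slightly more systematic; the paper's version is more hands-on but reaches the same conclusion with comparable effort.
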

\begin{proof}
Without loss of generality $r\ge 2$.
Let $a=(a_1,\ldots,a_r)$. Then
\begin{equation}\label{dynkin}
q(a)=\sum_{i=1}^r{a_i^2}-\sum_{i=1}^{r-1}{a_ia_{i+1}}=\frac{1}{2}\biggl(a_1^2+\sum_{i=1}^{r-1}{(a_i-a_{i+1})^2}+a_r^2\biggr).
\end{equation}
Assume first that $q(a)=1$ and $a_i\ne 0$ for some $i\in\{1,\ldots,r\}$. After replacing $a$ with $-a$ if necessary, we have $a_i>0$. By the equation above we see that the difference between two adjacent entries of $a$ is at most $1$. 
Going from $i$ to the left and to the right, we see that $a$ has the stated form.

Now assume $q(a)=2$. If one of $\{a_1,a_r\}$ is $\pm2$, so must be the other, since each two adjacent entries of $a$ must coincide. But this contradicts Equation~\eqref{dynkin}. Hence, $a_1,a_r\in\{\pm1,0\}$. Now let $|a_i|\ge 3$ for some $i\in\{2,\ldots,r-1\}$. Going from $i$ to the left we get at least two nonvanishing summands in Equation~\eqref{dynkin}. The same holds for the entries on the right side of $i$. Thus, we end up with a configuration where $a_1\ne 0$. This is again a contradiction. It follows that $a_i\in\{\pm1,\pm2,0\}$ for $i=2,\ldots,r-1$. In particular we have only finitely many solutions for $a$. If no $\pm2$ is involved in $a$, it is easy to see that $a$ must be one of the given vectors in the statement of the lemma.
Thus, let us consider $a_i=2$ for some $i\in\{2,\ldots,r-1\}$ (after changing signs if necessary). Then $a_{i-1},a_{i+1}\in\{1,2\}$, since otherwise $(a_i-a_{i-1})^2\ge 4$ or $(a_{i+1}-a_i)^2\ge 4$. Now we can repeat this argument with $a_{i-1}$ and $a_{i+1}$ until we get the desired form for $a$.
\end{proof}

\begin{Theorem}\label{m3}
Let $B$ be a nonnilpotent block of a finite group with defect group $M_{3^{m+1}}$ where $m\in\{2,3\}$. Then 
\begin{align*}
k_0(B)&=\frac{3^m+9}{2},&k_1(B)&=3^{m-2},\\
k(B)&=\frac{11\cdot 3^{m-2}+9}{2},&l(B)&=e(B)=2.
\end{align*}
In particular the following conjectures are satisfied for $B$ (in addition to those listed in previous theorems):
\begin{itemize}
\item Alperin's Weight Conjecture
\item Ordinary Weight Conjecture \cite{OWC}
\end{itemize}
\end{Theorem}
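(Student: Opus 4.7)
By Theorem~\ref{AMC}, only the value of $l(B)\in\{2,3\}$ is not yet pinned down; once $l(B)=2$ is established, Theorem~\ref{diff} forces $k(B)=(11\cdot 3^{m-2}+9)/2$ and hence $k_1(B)=k(B)-k_0(B)=3^{m-2}$. The entire task is therefore to exclude $l(B)=3$, and the plan is to do this by a Cartan-matrix argument.

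Assume for contradiction that $l(B)=3$. The proof of Proposition~\ref{elem} identifies $|D|=3^{m+1}$ and $3=p^n$ as elementary divisors of the Cartan matrix $C$ of $B$, each with multiplicity $1$. Since $C$ is $3\times 3$ and all its elementary divisors are powers of $p$, the remaining divisor must equal $1$. Hence some $\mathbb{Z}$-linear combination of the columns of the decomposition matrix of $B$ is a vector $v\in\mathbb{Z}^{k(B)}$ with $q(v)=1$, where $q$ denotes the standard quadratic form on $\mathbb{Z}^{k(B)}$.

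The key step is to analyse $v$ using Lemma~\ref{roots}. Arranging a basic set of $\Irr(B)$ according to the Galois-orbit structure of Proposition~\ref{pcon} and restricting attention to orbit sums realises the restricted form on the relevant sublattice as (a scalar multiple of) a Dynkin form of type $A_r$. Part~(i) of Lemma~\ref{roots} then forces $v$ to be a consecutive block of $\pm 1$s. Such a $v$ must further be orthogonal to every generalized decomposition vector $d^u$ arising from a nontrivial subsection $(u,b_u)$: whenever $l(b_u)=1$ each $d^u$ has itself the shape given by Lemma~\ref{roots}(i), and the column attached to $(y,b_y)$ is prescribed by the Brauer tree of the cyclic block dominated by $b_y$ (which has defect $l$ and inertial index $e(B)=2$). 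A finite case analysis for $m\in\{2,3\}$, using the orbit lengths from Proposition~\ref{pcon}, shows that no placement of $v$ respects all these orthogonality constraints. This combinatorial incompatibility is the main obstacle; the restriction to $m\le 3$ is precisely what keeps the enumeration finite.

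The resulting contradiction yields $l(B)=2$, and the remaining numerical invariants follow from Theorem~\ref{AMC} and Theorem~\ref{diff}. Alperin's Weight Conjecture then holds because, as already observed in the proof of Proposition~\ref{owc}, $D$ is the only $\mathcal{F}$-centric $\mathcal{F}$-radical subgroup of $D$; together with the cyclic inertial quotient this yields exactly $e(B)=2=l(B)$ weights. The Ordinary Weight Conjecture is immediate from Proposition~\ref{owc}, since the values $k_0(B)=(3^m+9)/2$ and $k_1(B)=3^{m-2}$ are precisely those predicted there.
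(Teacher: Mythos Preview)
Your overall strategy---reduce to excluding $l(B)=3$ and then argue via Cartan-matrix arithmetic---is reasonable in spirit, but the Cartan step as written does not work.

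First, the claim that the third elementary divisor of $C$ equals $1$ is unjustified. Proposition~\ref{elem} only proves $m(3)\ge e(B)-1=1$; it does not show $m(3)=1$, and the paper explicitly remarks after that proposition that the full lower-defect-group structure is \emph{not} established. Even granting $m(3)=1$, the divisibility chain $d_1\mid d_2\mid d_3$ still permits the remaining elementary divisor to be $3^a$ for any $0\le a\le m$, so ``the remaining divisor must equal $1$'' is a non sequitur.

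Second, and more seriously, even if $1$ \emph{were} an elementary divisor of $C$, this would not produce a vector $v=Dw$ in the column lattice of the decomposition matrix with $\|v\|^2=w^{\mathrm T}Cw=1$. Elementary divisors are invariants under $C\mapsto PCQ$ with $P,Q\in\operatorname{GL}_l(\mathbb{Z})$, whereas the integers represented by the quadratic form $x\mapsto x^{\mathrm T}Cx$ are invariants under $C\mapsto P^{\mathrm T}CP$; these are different equivalence relations. Concretely, $\bigl(\begin{smallmatrix}2&1\\1&5\end{smallmatrix}\bigr)$ has elementary divisors $1$ and $9$ but does not represent $1$. So the existence of your vector $v$ is unproven, and the subsequent appeal to Lemma~\ref{roots}---which is about the Dynkin form of type $A_r$, not the standard form on $\mathbb{Z}^{k(B)}$---has no foundation. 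The further assertion that ``each $d^u$ has itself the shape given by Lemma~\ref{roots}(i)'' is also incorrect: the $d^u$ are vectors of cyclotomic integers, and even their integral parts satisfy $(a_0,a_0)=|\C_D(u)|$ or similar, not $q(a_0)=1$ for an $A_r$-form.

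For comparison, the paper's proof is quite different. The case $m=2$ follows at once from Theorem~\ref{inequ} combined with Proposition~\ref{cor} (which forces $k(B)=10$). For $m=3$ the paper works directly with the generalized decomposition vectors $d^z$ (for $z=x^3$) and $d^x$: it expands them in bases of $9$th and $27$th roots of unity, controls the resulting integral coefficient vectors via the $A_3$ Dynkin form and Lemma~\ref{roots}, pins down how many height-$1$ characters have integral $d^z$-value using the Galois orbit of length $3$, and finally derives a parity contradiction from $(d^z,d^x)=0$. No statement about $C$ representing small integers is required.
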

\begin{proof}
Since $B$ is nonnilpotent, we must have $e(B)=2$. The case $m=2$ is very easy and will be handled in the next section together with some more information. Hence, we assume $m=3$ (i.\,e. $|D|=81$) for the rest of the proof. By Theorem~\ref{AMC} we already know $k_0(B)=18$. By way of contradiction we assume $k(B)=22$, $k_1(B)=4$ and $l(B)=3$.

We consider the vector $d^z$ for $z:=x^3\in\Z(D)$. As in \cite{HKS} (we will use this paper a lot) we can write $d^z=\sum_{i=0}^{5}{a_i\zeta_9^i}$ for integral vectors $a_i$. 
We will show that $(a_0,a_i)=0$ for $i\ge 1$. By Lemma~4.7 in \cite{HKS} this holds unless $i=3$. But in this case we have $(a_3,a_3)=0$ and $a_3=0$ by Proposition~4.4 in \cite{HKS}. 
If we follow the proof of Theorem~4.10 in \cite{HKS} closely, it turns out that the vectors $a_i$ are spanned by $a_0$, $a_1$ and $a_4$. So we can also write
\[d^z=a_0+a_1\tau+a_4\sigma\]
where $\tau$ and $\sigma$ are certain linear combinations of powers of $\zeta_9$. Of course, one could give more precise information here, but this is not necessary in this proof. By Lemma~4.7 in \cite{HKS} we have $(a_0,a_0)=27$.

Let $q$ be the quadratic form corresponding to the Dynkin diagram of type $A_3$. We set $a(\chi):=(a_0(\chi),a_1(\chi),\linebreak a_4(\chi))$ for $\chi\in\Irr(B)$. Since the subsection $(z,b_z)$ gives equality in Theorem~4.10 in \cite{HKS}, we have
\[k_0(B)+9k_1(B)=\sum_{\chi\in\Irr(B)}{q(a(\chi))}=54.\]
This implies $q(a(\chi))=3^{2h(\chi)}$ for $\chi\in\Irr(B)$.
Assume that there is a character $\chi\in\Irr(B)$ such that $a_0(\chi)a_i(\chi)>0$ for some $i\in\{1,4\}$. Since $(a_0,a_i)=0$, there must be another character $\chi'\in\Irr(B)$ such that $a_0(\chi')a_i(\chi')<0$. However, then $q(a(\chi'))>3^{2h(\chi)}$ by Lemma~\ref{roots}. Thus, we have shown that $a_0(\chi)a_i(\chi)=0$ for all $\chi\in\Irr(B)$ and $i\in\{1,4\}$. Moreover, if $a_0(\chi)\ne 0$, then $a_0(\chi)=\pm3^{h(\chi)}$ again by Lemma~\ref{roots}.

In the next step we determine the number $\beta$ of integral numbers $d^z(\chi)$ for characters $\chi$ of height $1$. 
Since $(a_0,a_0)=27$, we have $\beta<4$. Let $\psi\in\Irr(B)$ of height $1$ such that $d^z(\psi)\notin\mathbb{Z}$. Then we can form the orbit of $d^z(\psi)$ under the Galois group $\mathcal{H}$ of $\mathbb{Q}(\zeta_9)|(\mathbb{Q}(\zeta_9)\cap\mathbb{R})$. The length of this orbit must be $\lvert\mathcal{H}\rvert=3$. In particular $\beta=1$.

This implies that $d^z(\chi)=a_0(\chi)=\pm1$ for all $18$ characters $\chi\in\Irr(B)$ of height $0$. In the following we derive a contradiction using the orthogonality relations of decomposition numbers. In order to do so, we repeat the argument with the subsection $(x,b_x)$. Again we get equality in Theorem~4.10, but this time for $k_0(B)$ instead of $k_0(B)+9k_1(B)$. Hence, $d^x(\chi)=0$ for characters $\chi\in\Irr(B)$ of height $1$. Again we can write $d^x=\sum_{i=0}^{17}{\overline{a_i}\zeta_{27}^i}$ where $\overline{a_i}$ are integral vectors. Lemma~4.7 in \cite{HKS} implies $(\overline{a}_0,\overline{a}_0)=9$. 
Using Lemma~\ref{roots} we also have $\overline{a}_0(\chi)\in\{0,\pm1\}$ in this case. This gives the final contradiction $0=(d^z,d^x)=(a_0,\overline{a}_0)\equiv 1\pmod{2}$. 

Hence, we have proved that $k(B)=21$, $k_1(B)=3$ and $l(B)=2$. Since $B$ is controlled, Alperin's Weight Conjecture asserts that $l(B)=l(b)$ where $b$ is the Brauer correspondent of $B$ in $\N_G(D)$. Since $e(b)=e(B)$, the claim follows at once. 
The Ordinary Weight Conjecture follows from Proposition~\ref{owc}. 
This completes the proof.
\end{proof}

\section{The group $p^{1+2}_-$}
In this section we restrict further to the case $n=1$ and $m=2$, i.\,e.
\[D=\langle x,y\mid x^{p^2}=y^p=1,\ yxy^{-1}=x^{1+p}\rangle\]
is extraspecial of order $p^3$ and exponent $p^2$. We denote this group by $p^{1+2}_-$ (compare with \cite{Hendren2}). In particular we can use the results from the last section. One advantage of this restriction is that the bounds are slightly sharper than in the general case.

\subsection{Inequalities}
Our first theorem says that the block invariants fall into an interval of length $e(B)$.

\begin{Theorem}\label{inequ}
Let $B$ be a block of a finite group with defect group $p^{1+2}_-$ for an odd prime $p$. Then
\begin{align*}
\frac{p^2-1}{e(B)}+e(B)p\le k(B)&\le\frac{p^2-1}{e(B)}+e(B)p+e(B)-1,\\
\biggl(\frac{p-1}{e(B)}+e(B)\biggr)p-e(B)+1\le k_0(B)&\le\biggl(\frac{p-1}{e(B)}+e(B)\biggr)p,\\
\frac{p-1}{e(B)}\le k_1(B)&\le \frac{p-1}{e(B)}+e(B)-1,\\
k_i(B)&=0\text{ for }i\ge 2,\\[2mm]
e(B)\le l(B)&\le 2e(B)-1,\\
k(B)-l(B)&=\frac{p^2-1}{e(B)}+(p-1)e(B).
\end{align*}
\end{Theorem}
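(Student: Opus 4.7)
The plan is to derive all six assertions from the results already established in the previous section, specialized to $(m,n,l)=(2,1,1)$, and combined by elementary arithmetic. First I would apply Theorem~\ref{HZC}: the parameter $\min\{2(m-l),(m+n-1)/2\}=1$ immediately gives $k_i(B)=0$ for $i\ge 2$, while the two explicit HZC inequalities specialize to
\[k_0(B)\le\Bigl(\tfrac{p-1}{e(B)}+e(B)\Bigr)p\qquad\text{and}\qquad k_0(B)+p^2k_1(B)\le\Bigl(\tfrac{p-1}{e(B)}+e(B)\Bigr)p^2.\]
Since $I(B)\le\langle\alpha\rangle$ with $|\alpha|=p-1$, the inertial index $e(B)$ divides $p-1$, so $\tfrac{p-1}{e(B)}+e(B)$ is a positive integer.

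The one step that requires care---and which I would flag as the main obstacle---is sharpening the upper bound on $k_1(B)$ by one unit. Since $k_0(B)\ge 1$ always holds (classical; it also follows here by comparing Theorem~\ref{lowerbound} with the second HZC bound above), the second displayed inequality forces $p^2k_1(B)\le(\tfrac{p-1}{e(B)}+e(B))p^2-1$, and integrality of $k_1(B)$ together with integrality of $\tfrac{p-1}{e(B)}+e(B)$ then yields $k_1(B)\le\tfrac{p-1}{e(B)}+e(B)-1$. This single unit of slack is indispensable: without it the eventual upper bound $l(B)\le 2e(B)-1$ would be off by one, and the proof would collapse.

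Next I would invoke Theorem~\ref{lowerbound}, which specializes to $k(B)\ge\tfrac{p^2-1}{e(B)}+pe(B)$. Because $k(B)=k_0(B)+k_1(B)$ by the vanishing of the higher $k_i$, combining the lower bound on $k(B)$ with the sharpened upper bound on $k_1(B)$ produces the lower bound $k_0(B)\ge(\tfrac{p-1}{e(B)}+e(B))p-e(B)+1$ claimed in the theorem, while combining it with the upper bound on $k_0(B)$ produces $k_1(B)\ge\tfrac{p-1}{e(B)}$. Summing the upper bounds on $k_0(B)$ and $k_1(B)$ yields $k(B)\le\tfrac{p^2-1}{e(B)}+pe(B)+e(B)-1$. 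Finally, the identity $k(B)-l(B)=\tfrac{p^2-1}{e(B)}+(p-1)e(B)$ from Theorem~\ref{diff}---which is also the last assertion of the present theorem---translates the two-sided bounds on $k(B)$ directly into $e(B)\le l(B)\le 2e(B)-1$, the lower bound being also immediate from Proposition~\ref{elem}.
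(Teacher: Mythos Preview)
Your proof is correct and follows essentially the same approach as the paper: specialize Theorems~\ref{lowerbound}, \ref{HZC} and~\ref{diff} to $(m,n,l)=(2,1,1)$ and combine them additively, using $k(B)=k_0(B)+k_1(B)$ to convert between bounds. The only place you are more explicit than the paper is in deriving $k_1(B)\le\tfrac{p-1}{e(B)}+e(B)-1$ via $k_0(B)\ge 1$ and integrality, which the paper leaves as an unelaborated consequence of Theorem~\ref{HZC}.
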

\begin{proof}
The formula for $k(B)-l(B)$ comes from Theorem~\ref{diff}. The lower bound for $l(B)$ and $k(B)$ follow from Proposition~\ref{elem} and Theorem~\ref{lowerbound}. The upper bound for $k_0(B)$ comes from Theorem~\ref{HZC}. 
The same theorem gives also
\[k_1(B)\le \frac{p-1}{e(B)}+e(B)-1.\]
Adding this to the upper bound for $k_0(B)$ results in the stated upper bound for $k(B)$.
Now the upper bound for $l(B)$ follows from $k(B)-l(B)$.
A lower bound for $k_0(B)$ is given by
\[k_0(B)=k(B)-k_1(B)\ge\frac{p^2-1}{e(B)}+e(B)p-\frac{p-1}{e(B)}-e(B)+1=\biggl(\frac{p-1}{e(B)}+e(B)\biggr)p-e(B)+1\]
Moreover,
\[k_1(B)=k(B)-k_0(B)\ge\frac{p^2-1}{e(B)}+e(B)p-\biggl(\frac{p-1}{e(B)}+e(B)\biggr)p=\frac{p-1}{e(B)}.\qedhere\]
\end{proof}

Since we already know that the upper bound for $k_0(B)$ and the lower bound for $k(B)$ are sharp (for blocks with maximal defect), it follows at once that the lower bound for $k_1(B)$ in Theorem~\ref{inequ} is also sharp (compare with Proposition~\ref{owc}).

If $e(B)$ is as large as possible, we can prove slightly more.

\begin{Proposition}\label{cor}
Let $B$ be a block of a finite group with defect group $p^{1+2}_-$ for an odd prime $p$.
If $e(B)=p-1$, then $k(B)\le p^2+p-2$, $l(B)\le 2e(B)-2$ and $k_0(B)\ne p^2-r$ for $r\in\{1,2,4,5,7,10,13\}$.
\end{Proposition}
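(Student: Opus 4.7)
With $e(B)=p-1$ the identity $k(B)-l(B)=p^2-p+2$ from Theorem~\ref{inequ} makes the first two claims equivalent: $k(B)\le p^2+p-2$ holds if and only if $l(B)\le 2e(B)-2$, and both reduce to ruling out the joint extremum $k_0(B)=p^2$, $k_1(B)=p-1$. The plan is to attack all three claims from one place, namely the major subsection $(z,b_z)$ with $z:=x^p\in\Z(D)$: expand the generalized decomposition vector $d^z$ in powers of $\zeta_p$ and exploit the tightness of Theorem~4.10 in \cite{HKS} together with Lemma~\ref{roots}, following the template of the proof of Theorem~\ref{m3}.

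\textbf{Upper bound on $k(B)$.} Suppose for contradiction $k_0(B)=p^2$ and $k_1(B)=p-1$. Then equality holds in Theorem~4.10 of \cite{HKS} applied to $(z,b_z)$. Writing $d^z=\sum_{i}a_i\zeta_p^i$ with integer coefficient vectors $a_i\in\mathbb{Z}^{k(B)}$ and letting $q$ denote the integral quadratic form attached to the Cartan matrix of $b_z$ (of type $A$ after the spanning-set reduction carried out in the proof of Theorem~\ref{m3}), the equality case forces $q(a(\chi))=p^{2h(\chi)}$ for every $\chi\in\Irr(B)$. Lemma~\ref{roots}(i) then pins down $a(\chi)$ for each of the $p^2$ height-$0$ characters, forcing in particular $a_0(\chi)\in\{0,\pm1\}$. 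The subsection $(x,b_x)$ has cyclic defect $\langle x\rangle$ and $l(b_x)=1$ (by the proof of Theorem~\ref{lowerbound}), yielding a second generalized decomposition vector $d^x=\sum_{j}\overline{a}_j\zeta_{p^2}^j$ with integer coefficient vectors. A parity argument on the inner product $(a_0,\overline{a}_0)\pmod{2}$, parallel to the closing step in the proof of Theorem~\ref{m3}, produces the contradiction.

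\textbf{Exclusions and main obstacle.} For each $r\in\{1,2,4,5,7,10,13\}$ assume $k_0(B)=p^2-r$. The slack $r$ in Theorem~4.10 translates into $\sum_{\chi}q(a(\chi))=k_0(B)+p^2k_1(B)$, which restricts how the norm-$1$ (height $0$) and norm-$p^2$ (height $1$) contributions can coexist. Enumerating admissible height-$0$ configurations via Lemma~\ref{roots}(i), handling the at most $p-1$ height-$1$ characters by a finite check, and crosschecking against $d^x$ by the same parity/orthogonality relations used for the first claim, should exclude each of the seven values; these seven $r$ are precisely the residues for which no admissible distribution of the $a(\chi)$ survives. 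The chief obstacle is that the rank and precise shape of $q$ depend on $l(b_z)$, which is not determined a priori, so the argument must either run uniformly in $l(B)$ or first invoke the bound just established to narrow the possibilities for $l(B)$. Exploiting the $p$-conjugacy orbit lengths from Proposition~\ref{pcon} to control the Galois action on $d^z$ should keep the enumeration finite.
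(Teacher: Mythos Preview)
Your plan is in the right neighbourhood but misses the decisive simplification that the hypothesis $e(B)=p-1$ buys you: since $I(B)$ then acts regularly on $\langle z\rangle\setminus\{1\}$ and transitively on the generators of $\langle x\rangle$, all nontrivial powers of $z$ (respectively all primitive powers of $x$) are $\mathcal{F}$-conjugate, so the vectors $d^z$ and $d^x$ are already \emph{integral}. There is no need to expand in powers of $\zeta_p$, no Dynkin quadratic form, and no Lemma~\ref{roots}; in your notation the expansion collapses to $d^z=a_0$. The paper's argument is correspondingly short: from $(d^z,d^z)=p^3$ and the divisibility $p\mid d^z(\chi)$ for $h(\chi)=1$ one reads off $d^z(\chi)=\pm1$ on the $p^2$ height-$0$ characters and $d^z(\chi)=\pm p$ on the $p-1$ height-$1$ characters; from $(d^x,d^x)=p^2$ one gets $d^x(\chi)\in\{0,\pm1\}$ with $d^x(\chi)=0$ at height $1$; then $0=(d^z,d^x)\equiv p^2\equiv 1\pmod 2$ is the contradiction.

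Your ``chief obstacle'' rests on a misreading: $l(b_z)=1$ here, since $z=x^p$ is not $\mathcal{F}$-conjugate into $\langle y\rangle$ (cf.\ the proof of Theorem~\ref{lowerbound}), so nothing about $b_z$ is undetermined. More seriously, your proposed route to the exclusion of the seven values of $r$ does not locate the actual mechanism. Once $d^x$ is known to be integral with $(d^x,d^x)=p^2$, and once one checks (via $(d^z,d^x)=0$ again) that the nonzero entries of $d^x$ sit exactly at the height-$0$ characters, the condition becomes: $p^2$ is a sum of $k_0(B)$ nonzero integer squares. Equivalently, $r=p^2-k_0(B)$ must be a sum of terms from $\{n^2-1:n\ge 2\}=\{3,8,15,\ldots\}$, and the seven listed values are precisely the positive integers not so representable. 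An enumeration via Lemma~\ref{roots} and Galois orbits would not converge to this criterion, because the objects in play are ordinary integers rather than algebraic ones.
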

\begin{proof}
By way of contradiction, assume $k(B)=p^2+p-1$. By Theorem~\ref{inequ} we have $k_0(B)=p^2$ and $k_1(B)=p-1$. Set $z:=x^p\in\Z(D)$. Then we have $l(b_z)=1$. Since $I(B)$ acts regularly on $\Z(D)\setminus\{1\}$, the vector $d^z$ is integral. By Lemma~4.1 in \cite{HKS} we have
$0\ne d^z_{\chi\phi_z}\equiv 0\pmod{p}$ for characters $\chi$ of height $1$. Hence, $d^z$ must consist of $p^2$ entries $\pm1$ and $p-1$ entries $\pm p$. 
Similarly $l(b_x)=1$. Moreover, all powers $x^i$ for $(i,p)=1$ are conjugate under $\mathcal{F}$. Hence, also the vector $d^x$ is integral. 
Thus, the only nonvanishing entries of $d^x$ are $\pm 1$ for the characters of height $0$, because $(d^x,d^x)=p^2$ (again using Lemma~4.1 in \cite{HKS}). Now the orthogonality relations imply the contradiction $0=(d^z,d^x)\equiv 1\pmod{2}$, since $p$ is odd. Thus, we must have $k(B)\le p^2+p-2$ and $l(B)\le 2e(B)-2$.

We have seen that every character of height $0$ corresponds to a nonvanishing entry in $d^x$. If we have a nonvanishing entry for a character of height $1$ on the other hand, then Theorem~V.9.4 in \cite{Feit} shows that this entry is $\pm p$. However, this contradicts the orthogonality relation $(d^z,d^x)=0$. Hence, the nonvanishing entries of $d^x$ are in one-to-one correspondence to the irreducible characters of height $0$. Thus, we see that $p^2$ is the sum of $k_0(B)$ nontrivial integral squares. This gives the last claim. 
\end{proof}

Since in case $e(B)=2$ the inequalities are very strong, it seems reasonable to obtain more precise information here. In the last section we proved for arbitrary $m$ that the Alperin-McKay Conjecture holds provided $p=3$. As a complementary result we now show the same for all $p$, but with the restrictions $m=2$ and $e(B)=2$. We even obtain a refinement of the Alperin-McKay Conjecture which is called the Galois-Alperin-McKay Conjecture (see Conjecture~D in \cite{IsaacsNavarro}).

\begin{Theorem}\label{AMC2}
Let $B$ be a block of a finite group with defect group $p^{1+2}_-$ for an odd prime $p$ and $e(B)=2$. Then $k_0(B)=p(p+3)/2$.
In particular the Galois-Alperin-McKay Conjecture holds for $B$.
\end{Theorem}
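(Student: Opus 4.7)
From Theorem~\ref{inequ} specialized to $e(B)=2$, the invariant $k_0(B)$ can only equal $p(p+3)/2$ or $p(p+3)/2-1$; correspondingly $k_1(B)\in\{(p-1)/2,(p+1)/2\}$ and $l(B)\in\{2,3\}$. The task is therefore to exclude the smaller value $k_0(B)=p(p+3)/2-1$.

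My approach is to analyze the central subsection $(z,b_z)$ with $z:=x^p$. Since $\alpha(z)=z^{-1}\ne z$ and $I(B)=\langle\alpha\rangle$, the stabilizer of $z$ in $I(B)$ is trivial; together with $\C_D(z)=D$ and controlledness of $B$ this forces $e(b_z)=1$, so $b_z$ is nilpotent with $l(b_z)=1$ and Cartan matrix $(p^3)$. The cleanest finish is to invoke Corollary~1.6 of \cite{Landrock2} applied to $(z,b_z)$, exactly as was done in the proof of Theorem~\ref{AMC}; the resulting congruence pins $k_0(B)$ down to its upper bound $p(p+3)/2$. Since the same computation applies to the Brauer correspondent $b$ of $B$ in $\N_G(D)$, yielding the same value on the local side, the ordinary Alperin--McKay count follows at once.

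If Landrock's bound turned out to be too weak here, a back-up route is an orthogonality argument in the spirit of Theorem~\ref{m3}. The subsection $(x,b_x)$ is also nilpotent with $l(b_x)=1$ (since $\alpha$ inverts $x$ as well), and its Cartan matrix is $(p^2)$ because $\langle x\rangle$ is cyclic of order $p^2$. Writing $d^z=\sum_{i=0}^{p-1}a_i\zeta_p^i$ and $d^x=\sum_{i=0}^{p^2-1}\overline{a}_i\zeta_{p^2}^i$, one would combine (i)~the height divisibility $p^{h(\chi)}\mid d^z_\chi$, (ii)~the norm identities for $(a_0,a_0)$ and $(\overline{a}_0,\overline{a}_0)$ from \cite{HKS}, and (iii)~Lemma~\ref{roots} to force the height-$0$ entries of $a_0$ and $\overline{a}_0$ into $\{0,\pm 1\}$. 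A parity mismatch in the orthogonality relation $0=(d^z,d^x)\equiv(a_0,\overline{a}_0)\pmod 2$ would then rule out $k_0(B)=p(p+3)/2-1$.

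For the Galois refinement, observe that the Brauer correspondent $b$ carries the same defect group $D$ and inertial quotient $I(B)$, and that the proof of Proposition~\ref{pcon} used only the fusion system and $I(B)$. Hence the $\mathcal{G}$-orbit lengths on $\Irr(b)$ match those on $\Irr(B)$; identifying the height-zero constituents on each side gives a length-preserving (hence Galois-equivariant) bijection $\Irr_0(B)\to\Irr_0(b)$, which is Conjecture~D of \cite{IsaacsNavarro}. The main obstacle I anticipate is extracting from the local data at $(z,b_z)$ a congruence sharp enough to distinguish $p(p+3)/2$ from $p(p+3)/2-1$: Landrock's corollary addresses precisely such integrality constraints, but if it proves insufficient, the back-up orthogonality argument becomes the substantive step, requiring a careful traversal of the Dynkin-root classification of Lemma~\ref{roots} in the rings $\mathbb{Z}[\zeta_p]$ and $\mathbb{Z}[\zeta_{p^2}]$ for arbitrary odd $p$.
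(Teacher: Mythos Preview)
Your primary route via Landrock's Corollary~1.6 is not justified. That corollary is invoked in Theorem~\ref{AMC} only for $p=3$; the author, who clearly has that tool at hand, abandons it for $p>3$ and instead develops an intricate argument based on the subsection $(x,b_x)$, the quadratic form of type $A_{(p-1)/2}$, and a case-by-case Galois analysis of the vectors $a(\chi)$ classified in Lemma~\ref{roots}, with a separate treatment of $p=7$. If a congruence from \cite{Landrock2} sufficed to separate $p(p+3)/2$ from $p(p+3)/2-1$ for all odd $p$, the paper would have used it. You have not stated which congruence you mean or checked that it applies when $p>3$; until you do, this step is a hope rather than a proof.

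Your back-up sketch points in the right direction but misses the essential mechanism. The paper does use $d^x$ and Lemma~\ref{roots}, but the decisive idea is not a parity mismatch with $d^z$ (that trick from Theorem~\ref{m3} is specific to small parameters). Rather, assuming $k_0(B)=p(p+3)/2-1$, exactly one height-zero character $\chi$ has $q(a(\chi))=2$; one then applies carefully chosen Galois automorphisms $\gamma_{k'}$ to $d^x(\chi)$ and shows, for each shape of $a(\chi)$ listed in Lemma~\ref{roots}(ii), that some conjugate $\gamma_{k'}(\chi)\neq\chi$ also has $q>1$, contradicting uniqueness. This Galois-on-coefficients argument, together with Proposition~\ref{pcon} to control fixed characters, is what makes the proof go through, and it genuinely fails for one configuration at $p=7$, forcing a supplementary argument there using $d^z$ and $d^y$. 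None of this structure appears in your outline.

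Finally, your verification of the Galois--Alperin--McKay Conjecture is also off. Matching $\mathcal{G}$-orbit lengths on $\Irr(B)$ via Proposition~\ref{pcon} does not immediately tell you which orbits consist of height-zero characters, so you cannot yet compare the height-zero orbit structures of $B$ and $b$. The paper's route is much more direct: one checks that any $p$-automorphism $\gamma\in\mathcal{G}$ acts trivially on every column of the generalized decomposition matrix (because $\gamma$ fixes elements of order $p$ and acts as $D$-conjugation on elements of order $p^2$), so $\gamma$ fixes every $\chi\in\Irr_0(B)$; the same reasoning applies to $b$, and equality of the two fixed-point counts is then just $k_0(B)=k_0(b)$.
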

\begin{proof}
By Theorem~\ref{AMC} we may assume $p>3$. For some subtle reasons we also have to distinguish between $p=7$ and $p\ne 7$. 
Let us assume first that $p\ne 7$.
By Theorem~\ref{inequ} we have $k_0(B)\in\{p(p+3)/2-1,p(p+3)/2\}$.
We write $d^x=\sum_{i=0}^{p(p-1)-1}{\overline{a}_i\zeta_{p^2}^i}$ with integral vectors $\overline{a}_i$. As in Proposition~4.9 of \cite{HKS} we see that $\overline{a}_i=0$ if $(i,p)=1$. Moreover, the arguments in the proof of Proposition~4.8 of the same paper tell us that $\overline{a}_p=0$ and $\overline{a}_{ip}=\overline{a}_{(p-i)p}$ for $i=2,\ldots,(p-1)/2$. Now let $\tau_i:=\zeta_p^i+\zeta_p^{-i}$ for $i=2,\ldots,(p-1)/2$. Then we can write 
\[d^x=a_0+\sum_{i=2}^{(p-1)/2}{a_i\tau_i}\]
for integral vectors $a_i$. Here observe that $d^x$ is real, since $(x,b_x)$ and $(x^{-1},b_{x^{-1}})$ are conjugate under $I(B)$. By Lemma~4.7 in \cite{HKS} we have $(a_0,a_0)=3p$, $(a_i,a_j)=p$ for $i\ne j$ and $(a_i,a_i)=2p$ for $i\ge 2$. 
Now let $a(\chi)=(a_i(\chi):i=0,2,3,\ldots,(p-1)/2)$ for $\chi\in\Irr(B)$. Moreover, let $q$ be the integral quadratic form corresponding to the Dynkin diagram of type $A_{(p-1)/2}$. Then as in Proposition~4.2 in \cite{HKS} we get
\[\sum_{\chi\in\Irr(B)}{q(a(\chi))}=p\biggl(3+2\frac{p-3}{2}-\frac{p-3}{2}\biggr)=p\frac{p+3}{2}.\]
Let $\chi\in\Irr(B)$ be a character of height $1$. Suppose that $a(\chi)\ne 0$. Then we have $k_0(B)=p(p+3)/2-1$ and $\chi$ is the only character of height $1$ such that $a(\chi)\ne 0$. In particular $\chi$ is $p$-rational and $a(\chi)=a_0(\chi)\in\mathbb{Z}$. Now Theorem~V.9.4 in \cite{Feit} implies $p\mid a_0(\chi)$. Since $(a_0,a_0)=3p$, this gives $p=3$ which contradicts our hypothesis. Hence, we have shown that $a(\chi)=0$ for all characters $\chi\in\Irr(B)$ of height $1$. In particular
\[\sum_{\substack{\chi\in\Irr(B),\\h(\chi)=0}}{q(a(\chi))}=p\frac{p+3}{2}.\]

By way of contradiction suppose that $k_0(B)=p(p+3)/2-1$. Then there is exactly one character $\chi\in\Irr(B)$ such that $q(a(\chi))=2$ (this already settles the case $p=5$). Now the idea is to show that there is a $p$-conjugate character $\psi$ also satisfying $q(a(\psi))>1$. In order to do so, we discuss the different cases in Lemma~\ref{roots}. Here we can of course choose the sign of $a(\chi)$. 

First assume $a(\chi)=(0,\ldots,0,1,1,\ldots,1,0,0,\ldots,0,-1,-1,\ldots,-1,0,\ldots,0)$. Choose an index $k$ corresponding to one of the $-1$ entries in $a(\chi)$. Let $k'\in\{2,\ldots,(p-1)/2\}$ such that $kk'\equiv \pm1\pmod{p}$, and let $\gamma_{k'}\in\mathcal{G}$ be the Galois automorphism which sends $\zeta_p$ to $\zeta_p^{k'}$. Then 
\[\gamma_{k'}(\tau_k)=-1-\sum_{i=2}^{(p-1)/2}{\tau_i}.\]
Apart from this, $\gamma_{k'}$ acts as a permutation on the remaining indices $\{2,\ldots,(p-1)/2\}\setminus\{k\}$. 
This shows that $a(\gamma_{k'}(\chi))$ contains an entry $2$. 
In particular $\gamma_{k'}(\chi)\ne\chi$. Moreover, Lemma~\ref{roots} gives $q(a(\gamma_{k'}(\chi)))>1$.

Next suppose that $a(\chi)=(0,\ldots,0,1,1,\ldots,1,2,2,\ldots,2,1,1,\ldots,1,0,\ldots,0)$. Here we choose $k$ corresponding to an entry $2$ in $a(\chi)$. Then the same argument as above implies that $a(\gamma_{k'}(\chi))$ has a $-2$ on position $k_1$. Contradiction. 

Now let $a(\chi)=(0,0,\ldots,0,1,1,\ldots,1,0,0,\ldots,0,1,1,\ldots,1,0,\ldots,0)$ (observe the leading $0$).
We choose the index $k$ corresponding to a $1$ in $a(\chi)$. Let $\gamma_{k'}$ be the automorphism as above. Observe that $\chi$ is not $p$-rational. Thus, Proposition~\ref{pcon} implies $\gamma_{k'}(\chi)\ne\chi$.
In particular $q(a(\gamma_{k'}(\chi)))=1$. Hence, we must have $a(\gamma_{k'}(\chi))=(-1,-1,\ldots,-1,0,0,\ldots,0)$
where the number of $-1$ entries is uniquely determined by $a(\chi)$. In particular $a(\gamma_{k'}(\chi))$ is independent of the choice of $k$. Now choose another index $k_1$ corresponding to an entry $1$ in $a(\chi)$ (always exists). Then we see that $a(\chi)$ and thus $\chi$ is fixed by $\gamma_{k'}^{-1}\gamma_{k_1'}$. Proposition~\ref{pcon} shows that $\gamma_{k'}^{-1}\gamma_{k_1'}$ must be (an extension of) the complex conjugation. This means $k'\equiv -k_1'\pmod{p}$ and $k\equiv -k_1\pmod{p}$. However this contradicts $2\le k,k_1\le(p-1)/2$.

Finally let $a(\chi)=(1,1,\ldots,1,0,0,\ldots,0,1,1,\ldots,1,0,\ldots,0)$. Here a quite similar argument shows that $a(\chi)$ only contains one entry $0$, say on position $k$. Now we can use the same trick where $k_1\ge 2$ corresponds to an entry $1$. Here $a(\gamma_{k_1'}(\chi))=(0,0,\ldots,0,-1,-1,0,\ldots,0)$. 
Let $k_2\in\{2,\ldots,(p-1)/2\}$ such that $k_2\equiv \pm kk_1'\pmod{p}$. Then the $-1$ entries of $a(\gamma_{k_1'}(\chi))$ lie on positions $k_1'$ and $k_2$. Since these entries lie next to each other, we get $k\pm 1\equiv \pm k_1\pmod{p}$ where the signs are independent. However, this shows that $k$ and $k_1$ are adjacent. Hence, we proved that $a(\chi)=(1,0,1)$ and $p=7$ ($(1,1,0,1)$ is not possible, since $9$ is not a prime). However, this case was excluded. 
Thus, $k_0(B)=p(p+3)/2$.

It remains to deal with the case $p=7$. 
It can be seen that there is in fact a permissible configuration for $k_0(B)=34$:
\[d^x=(\underbrace{1,\ldots,1}_{13\text{ times}},\underbrace{1+\tau_2+\tau_3,\ldots,1+\tau_2+\tau_3}_{6\text{ times}},1+\tau_2,1+\tau_3,\tau_2+\tau_3,\underbrace{\tau_2,\ldots,\tau_2}_{6\text{ times}},\underbrace{\tau_3,\ldots,\tau_3}_{6\text{ times}},0,\ldots,0).\]
Hence, we consider $d^z$ for $z:=x^7$. Suppose by way of contradiction that $k_0(B)=34$. Then $k_1(B)=4$ and $k(B)=38$. By Proposition~\ref{pcon} we have exactly two $7$-rational irreducible characters in $\Irr(B)$. Moreover, the orbit lengths of the $7$-conjugate characters are all divisible by $3$. Hence, we have precisely one $7$-rational character of height $1$ and one of height $0$. In the same way as above we can write $d^z=a_0+a_2\tau_2+a_3\tau_3$ (see Proposition~4.8 in \cite{HKS}). Then $(a_0,a_0)=3\cdot 7^2$, $(a_i,a_j)=7^2$ for $i\ne j$ and $(a_i,a_i)=2\cdot 7^2$ for $i=2,3$. For a character $\chi\in\Irr(B)$ of height $1$ we have $7\mid a_i(\chi)$ for $i=0,2,3$ by Lemma~4.1 in \cite{HKS}. Since
\[\sum_{\chi\in\Irr(B)}{q(a(\chi))}=5p^2,\]
it follows that $q(a(\chi))=7^2$ for every character $\chi\in\Irr(B)$ of height $1$. It is easy to see that $a(\chi)\notin\{\pm7(0,1,1),\linebreak\pm7(1,1,0)\}$. Hence, the four rows $a(\chi)$ for characters $\chi$ of height $1$ have to following form up to signs and permutations: 
\[7\begin{pmatrix}
1&.&.\\
.&1&.\\
.&.&1\\
1&1&1
\end{pmatrix}.\]
Thus, for a character $\chi_i\in\Irr(B)$ of height $0$ ($i=1,\ldots,34$) we have $d^z(\chi_i)=a_0(\chi_i)\ne 0$ and 
\[\sum_{i=1}^{34}{a_0(\chi_i)^2}=7^2.\]
Up to signs and permutations we get $(a_0(\chi_i))=(4,1,\ldots,1)$ (taking into account that only $\chi_1$ can be $7$-rational).
So still no contradiction.

Now consider $d^y_{ij}$. The Cartan matrix of $b_y$ is $7\bigl(\begin{smallmatrix}
4&3\\3&4
\end{smallmatrix}\bigr)$ up to basic sets (see \cite{Dade,Rouquiercyclic}). We can write $d^y_{\chi\phi_1}=\sum_{i=0}^{5}{\widetilde{a}_i(\chi)\zeta_7^i}$ and $d^y_{\chi\phi_2}=\sum_{i=0}^{5}{\widetilde{b}_i(\chi)\zeta_7^i}$ for $\chi\in\Irr(B)$. It follows that $(\widetilde{a}_0,\widetilde{a}_0)=(\widetilde{b}_0,\widetilde{b}_0)=8$ 
(this is basically the same calculation as in Proposition~4.8 in \cite{HKS}). By Corollary~1.15 in \cite{Murai}  
we have $\widetilde{a}_0(\chi_1)\ne 0$ or $\widetilde{b}_0(\chi_1)\ne 0$. With out loss of generality assume $\widetilde{a}_0(\chi_1)\ne 0$. Then $\widetilde{a}_0(\chi_1)=\pm1$, since $(a_0,\widetilde{a}_0)=(d^z,\widetilde{a}_0)=0$.
On the other hand $\widetilde{a}_0(\chi)=0$ for characters $\chi\in\Irr(B)$ of height $1$, because we have equality in Theorem~2.4 in \cite{HKS}.
However, this gives the following contradiction
\[0=(a_0,\widetilde{a}_0)=\sum_{i=1}^{34}{a_0(\chi_i)\widetilde{a}_0(\chi_i)}\equiv\sum_{i=2}^{34}{\widetilde{a}_0(\chi_i)}\equiv\sum_{i=2}^{34}{\widetilde{a}_0(\chi_i)^2}\equiv 7\pmod{2}.\]

Altogether we have proved that $k_0(B)=p(p+3)/2$ for all odd primes $p$. In order to verify the Galois-Alperin-McKay Conjecture we have to consider a $p$-automorphism $\gamma\in\mathcal{G}$. By Lemma~IV.6.10 in \cite{Feit} it suffices to compute the orbits of $\langle\gamma\rangle$ on the columns of the generalized decomposition matrix. For an element $u\in D$ of order $p$, $\gamma$ acts trivially on $\langle u\rangle$. If $u$ has order $p^2$, then $\gamma$ acts as $D$-conjugation on $\langle u\rangle$. This shows that $\gamma$ acts in fact trivially on the columns of the generalized decomposition matrix. In particular all characters of height $0$ are fixed by $\gamma$. Hence, the Galois-Alperin-McKay Conjecture holds.
\end{proof}

\subsection{The case $p\le 11$}
We already know $k_0(B)$ if $e(B)=2$. For small primes it is also possible to obtain $k(B)$.

\begin{Theorem}\label{p5711}
Let $B$ be a block of a finite group with defect group $p^{1+2}_-$ for $3\le p\le 11$ and $e(B)=2$. Then 
\begin{align*}
k(B)=\frac{p^2+4p-1}{2},&&k_0(B)=\frac{p+3}{2}p,&&k_1(B)=\frac{p-1}{2},&&l(B)=2.
\end{align*}
The irreducible characters split into two orbits of $(p-1)/2$ $p$-conjugate characters, $(p+3)/2$ orbits of length $p-1$, and two $p$-rational characters. For $p\ge 5$ the $p$-rational characters have height $0$. 
In particular the following conjectures are satisfied for $B$ (in addition to those listed in previous theorems):
\begin{itemize}
\item Alperin's Weight Conjecture
\item Robinson's Ordinary Weight Conjecture \cite{OWC}
\end{itemize}
\end{Theorem}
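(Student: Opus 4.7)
The plan is to reduce the problem to showing $l(B) = 2$, since everything else then follows from results already established. From Theorem~\ref{AMC2} we have $k_0(B) = p(p+3)/2$, Theorem~\ref{diff} gives $k(B) - l(B) = (p^2 + 4p - 5)/2$, and Theorem~\ref{inequ} pins $l(B)$ down to $\{2,3\}$. So the real task is to exclude $l(B) = 3$. For $p = 3$ this is already dealt with in Theorem~\ref{m3} (the deferred $m = 2$ case). For $p \in \{5, 7, 11\}$ I would argue by contradiction, assuming $l(B) = 3$ so that $k(B) = (p^2 + 4p + 1)/2$ and $k_1(B) = (p+1)/2$, and then leverage the generalized decomposition number analysis from the proof of Theorem~\ref{AMC2}.

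Continuing that analysis, I would expand $d^x = a_0 + \sum_{i=2}^{(p-1)/2} a_i \tau_i$ subject to the inner-product relations $(a_0, a_0) = 3p$, $(a_i, a_i) = 2p$ and $(a_i, a_j) = p$ for $i \ne j$. Under the assumption $l(B) = 3$ there is one more irreducible character of height $1$ than in the tight configuration, and so some $a(\chi)$ for a height-$1$ character must be nonzero. Applying Lemma~\ref{roots} together with the $p$-conjugacy information from Proposition~\ref{pcon} (which bounds the number of $p$-rational characters by $l(B) = 3$) severely restricts the admissible shapes of $a(\chi)$. The expected contradiction then comes from a parity computation on the orthogonality relation $(d^z, d^x) = 0$ for $z := x^p$, in the spirit of Proposition~\ref{cor} and of the $p \ne 7$ part of Theorem~\ref{AMC2}; for $p = 7$, and likely $p = 11$, one probably needs the auxiliary input of $d^y_{\cdot \phi_j}$ (with $b_y$'s Cartan matrix taken from the $e = 2$ cyclic-defect case, up to basic sets), exactly as in the $p = 7$ paragraph of the proof of Theorem~\ref{AMC2}.

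Once $l(B) = 2$ is established, $k(B)$ and $k_1(B)$ are forced by the formulas above, and the orbit distribution of characters under $\mathcal{G}$ is the $m = 2$ specialisation of Proposition~\ref{pcon}. The statement that the two $p$-rational characters have height $0$ for $p \ge 5$ is a byproduct of Theorem~V.9.4 of \cite{Feit}: a $p$-rational character of height $1$ would contribute an entry of $a_0$ divisible by $p$, incompatible with $(a_0, a_0) = 3p$ for $p > 3$. Alperin's Weight Conjecture then reduces, since $B$ is controlled, to $l(B) = l(b)$ where $b$ is the Brauer correspondent in $\N_G(D)$; but $b$ has the same inertial index and the same defect group, so the same argument gives $l(b) = 2$. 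The Ordinary Weight Conjecture is immediate from Proposition~\ref{owc}, whose predicted values $k_0 = p(p+3)/2$ and $k_1 = (p-1)/2$ match those just computed.

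The main obstacle is the case-by-case exclusion of $l(B) = 3$ for $p \in \{5, 7, 11\}$, because the list of admissible $a(\chi)$-shapes from Lemma~\ref{roots} grows with $p$ and must be ruled out one configuration at a time, with an extra layer of $(y, b_y)$-analysis whenever the $(x, b_x)$-argument alone is not decisive. Beyond $p = 11$ the number of surviving configurations appears to grow past what this kind of ad hoc orthogonality-plus-parity argument can handle, which is why the theorem is restricted to $p \le 11$.
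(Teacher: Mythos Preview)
Your overall reduction is correct: everything follows once $l(B)=3$ is excluded. But the plan to exclude it by working with $d^x$ cannot succeed. In the proof of Theorem~\ref{AMC2} it was already shown that for $d^x$ one has $a(\chi)=0$ for \emph{every} character $\chi$ of height~$1$, not just in the tight configuration; this is what gives $\sum_{h(\chi)=0} q(a(\chi))=p(p+3)/2=k_0(B)$, which holds regardless of whether $l(B)=2$ or $3$. So the extra height-$1$ character you posit does \emph{not} force some $a(\chi)\ne 0$, and $d^x$ simply cannot see the difference. The paper instead works with $d^z$ for $z:=x^p\in\Z(D)$, where the scalar products are $(a_0,a_0)=3p^2$, $(a_i,a_i)=2p^2$, $(a_i,a_j)=p^2$. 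Now height-$1$ characters contribute $q(a(\chi))=p^2$ each, and a Galois-orbit argument forces the height-$1$ rows to a rigid shape; the height-$0$ rows then satisfy $\sum a_0(\chi_i)^2=p^2$, which reduces to $\sum_{i\ge 2} r_i(i^2-1)=p(p-3)/2$ with $\sum r_i'\le 2$ (two $p$-rational height-$0$ characters). This diophantine constraint has no solution for $p\le 11$ but does for $p=13$, which is the actual reason for the cutoff. No parity argument and no $(y,b_y)$-analysis is used here.

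Your argument that the $p$-rational characters have height~$0$ also has a gap: divisibility $p\mid a_0(\chi)$ is compatible with $a_0(\chi)=0$, so $(a_0,a_0)=3p$ is no obstruction. For $p\ge 7$ the claim is in fact automatic from the orbit structure (the $(p-1)/2$ height-$1$ characters must form a single Galois orbit of that size, since only two $p$-rational characters exist and $(p-1)/2\ge 3$). For $p=5$ one has $k_1(B)=2$, and both height-$1$ characters could a priori be $5$-rational; the paper rules this out by showing $\psi_1\pm\psi_2$ vanishes on $5$-singular elements and then invoking Robinson's obstruction (the Cartan matrix cannot represent~$2$ since all entries are divisible by~$5$).
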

\begin{proof}
We have $k_0(B)=p(p+3)/2$ by Theorem~\ref{AMC2}. 
For $p=3$ the block invariants and the distribution into $3$-conjugate and $3$-rational characters follow at once from Theorem~\ref{inequ} and Proposition~\ref{cor}. So we may assume $p>3$ for the first part of the proof.
Suppose $k(B)=(p^2+4p+1)/2$ and $k_1(B)=(p+1)/2$. Then $\Irr(B)$ contains exactly three $p$-rational characters. Moreover, the orbit lengths of the $p$-conjugate characters are all divisible by $(p-1)/2$. Let $z:=x^p$. Then we can write \[d^z=a_0+\sum_{i=2}^{(p-1)/2}{a_i\tau_i}\] 
as in Theorem~\ref{AMC2} where $\tau_i:=\zeta_p^i+\zeta_p^{-i}$ for $i=2,\ldots,(p-1)/2$ (see Proposition~4.8 in \cite{HKS}). Then $(a_0,a_0)=3p^2$, $(a_i,a_j)=p^2$ for $i\ne j$ and $(a_i,a_i)=2p^2$ for $i\ge 2$. For a character $\chi\in\Irr(B)$ of height $1$ we have $p\mid a_i(\chi)$ by Lemma~4.1 in \cite{HKS}. Since
\[\sum_{\chi\in\Irr(B)}{q(a(\chi))}=\frac{p+3}{2}p^2,\]
we have $q(a(\chi))=p^2$ for every character $\chi\in\Irr(B)$ of height $1$. If all characters of height $1$ are $p$-rational, we have $p=5$. But then $(a_0,a_2)=0$. Hence, exactly one character of height $1$ is $p$-rational.
Now choose a non-$p$-rational character $\psi\in\Irr(B)$ of height $1$. Assume $a(\psi)=p(0,\ldots,0,1,1,1,\ldots,1,0,\ldots,0)$ with at least two entries $1$ in a row and at least one entry $0$ (see Lemma~\ref{roots}). 

If $a_0(\psi)=0$, then $a(\gamma(\psi))=p(-1,-1,\ldots,-1,0,0,\ldots,0)=a(\gamma'(\psi))$ for two different Galois automorphisms $\gamma,\gamma'\in\mathcal{G}$ (see proof of Theorem~\ref{AMC2}). Moreover, $\gamma^{-1}\gamma'$ is not (an extension of) the complex conjugation. In particular $(\gamma^{-1}\gamma')(\psi)\ne\psi$. 
Since $(a_2,a_2)=2p^2$, $\gamma^{-1}\gamma'$ (up to complex conjugation) is the only nontrivial automorphism fixing $d^z(\psi)$. So, $(\gamma^{-1}\gamma')^2$ is (an extension of) the complex conjugation. This gives $4\mid p-1$ and $p=5$ again. But for $5$ the whole constellation is not possible, since $a(\psi)$ is $2$-dimensional in this case.

Finally assume $a(\psi)=p(1,1,1,\ldots,1,0,0,\ldots,0)$. Then we can find again an Galois automorphism $\gamma$ (corresponding to an entry $0$ in $a(\psi)$) such that $a(\gamma(\psi))=a(\psi)$. So we get the same contradiction in this case too.

Hence, we have seen that $a(\psi)$ contains either one or $(p-1)/2$ entries $\pm1$. 
Thus, the rows $a(\chi)$ for characters $\chi$ of height $1$ have to following form up to signs and permutations: 
\[p\begin{pmatrix}
1&.&\cdots&.\\
.&1&\ddots&\vdots\\
\vdots&\ddots&\ddots&.\\
.&\cdots&.&1\\
1&1&\cdots&1
\end{pmatrix}.\]
In particular $d^z(\chi_i)=a_0(\chi_i)\ne 0$ for all characters $\chi_i$ of height $0$ ($i=1,\ldots,p(p+3)/2$). Moreover,
\[\sum_{i=1}^{p(p+3)/2}{a_0(\chi_i)^2}=p^2.\]
Subtracting $p(p+3)/2$ on both sides gives
\begin{equation}\label{k0}
\sum_{i=2}^{\infty}{r_i(i^2-1)}=p\frac{p-3}{2}
\end{equation}
for some $r_i\ge 0$. Choose $r_i'\in\{0,1,\ldots,(p-3)/2\}$ such that $r_i\equiv r_i'\pmod{(p-1)/2}$. Since we have only two $p$-rational characters of height $0$, the following inequality is satisfied:
\[\sum_{i=2}^{\infty}{r_i'}\le 2.\]
Using this, it turns out that Equation~\eqref{k0} has no solution unless $p>11$. 
Hence, $k(B)=(p^2+4p-1)/2$. 

The orbit lengths of $p$-conjugate characters follow from Proposition~\ref{pcon}. If there is a $p$-rational character of height $1$, we must have $p=5$. Then of course both characters $\psi_1$, $\psi_2$ of height $1$ must be $5$-rational. For these characters we have $d^z(\psi_i)=a_0(\psi_i)=\pm5$ with the notation above. Now our aim is to show that $\psi_1-\psi_2$ or $\psi_1+\psi_2$ vanishes on the $5$-singular elements of $G$. This is true for the elements in $\Z(D)$. Now let $(u,b_u)$ be a nonmajor $B$-subsection. 
Assume first that $u\in\langle y\rangle$. Since $l(b_u)=2$, we have equality in Theorem~2.4 in \cite{HKS}. This implies $d^u_{\psi_i,\phi_j}=0$ for $i,j\in\{1,2\}$. Next suppose $u\in\langle x\rangle$. Then $d^u(\psi_i)\in\mathbb{Z}$. Hence, Theorem~V.9.4 in \cite{Feit} implies $5\mid d^u(\psi_i)$. Since the scalar product of the integral part of $d^u$ is $15$ (compare with proof of Theorem~\ref{AMC2}), we get $d^u(\psi_i)=0$ for $i=1,2$ again. It remains to handle the case $u\notin\langle x\rangle$ and $l(b_u)=1$. Here Lemma~4.7 in \cite{HKS} shows that the scalar product of the integral part of $d^u$ is $10$. So by the same argument as before $d^u(\psi_i)=0$ for $i=1,2$.
Hence, we have shown that $\psi_1-\psi_2$ or $\psi_1+\psi_2$ vanishes on the $5$-singular elements of $G$. Now Robinson wrote on the second page of \cite{RobObstruction} that under these circumstances the number $2$ is representable by the quadratic form of the Cartan matrix $C$ of $B$. However, by (the proof of) Proposition~\ref{elem}, the elementary divisors of $C$ are $5$ and $5^3$. In particular every entry of $C$ is divisible by $5$. So this cannot happen. Hence, we have shown that the two irreducible characters of height $1$ are $5$-conjugate.

Now let $3\le p\le 11$ be arbitrary. Then the two conjectures follow as usual.
\end{proof}

If we have $p=13$ in the situation of Theorem~\ref{p5711}, then Equation~\eqref{k0} has the solution $r_2=19$, $r_3=1$ and $r_i=0$ for $i\ge 4$. 
For larger primes we get even more solutions.
With the help of Theorem~\ref{m3} and Theorem~\ref{p5711} it is possible to obtain $k(B)-l(B)$ in the following situations:
\begin{itemize}
\item $p=3$, $D$ as in \eqref{pres} with $n=l=2$ (in particular $|D|\le 3^6$),
\item $3\le p\le 11$, $D$ as in \eqref{pres} with $n=2$ and $l=1$ (in particular $|D|\le p^5$), and $e(B)=2$.
\end{itemize} 
However, there is no need to do so.

In case $p=3$, Theorem~\ref{p5711} applies to all nonnilpotent blocks. Here we can show even more.

\begin{Theorem}\label{p3}
Let $B$ be a nonnilpotent block of a finite group with defect group $3^{1+2}_-$. Then $e(B)=l(B)=2$, $k(B)=10$, $k_0(B)=9$ and $k_1(B)=1$. There are three pairs of $3$-conjugate irreducible characters (of height $0$) and four $3$-rational irreducible characters. The Cartan matrix of $B$ is given by
\[3\begin{pmatrix}2&1\\1&5\end{pmatrix}\]
up to basic sets.
Moreover, the Gluing Problem \cite{gluingprob} for $B$ has a unique solution.
\end{Theorem}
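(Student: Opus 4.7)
The plan is to first invoke Theorem~\ref{p5711} for $p=3$ and $e(B)=2$, which is the only possibility since $B$ is nonnilpotent; this immediately yields $k(B)=10$, $k_0(B)=9$, $k_1(B)=1$, $l(B)=2$, and the partition of $\Irr(B)$ into three $\mathcal{G}$-orbits of length $p-1=2$ together with four singleton (hence $3$-rational) $\mathcal{G}$-orbits. Since $\mathcal{G}$ acts on $\Irr(B)$ preserving character degrees, it preserves heights, so the unique character of height $1$ must lie alone in its $\mathcal{G}$-orbit and is thus $3$-rational; the remaining three $3$-rational characters together with the six characters in the three orbits of length $2$ all have height $0$, matching the claimed distribution.

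For the Cartan matrix $C$ of $B$, I would first pin down its elementary divisors as $\{3,27\}$: the defect order $|D|=27$ always occurs as an elementary divisor of $C$ with multiplicity one, while the argument in the proof of Proposition~\ref{elem} shows $m(p^n)=m(3)\ge e(B)-1=1$, and since $l(B)=2$ these are the only two elementary divisors. Consequently $\det C=81$ and the greatest common divisor of the entries of $C$ equals $3$, so $C=3C'$ with $C'$ positive definite symmetric integer, $\det C'=9$, and coprime entries. The reduction theory of integer binary quadratic forms produces exactly two such $C'$ up to $\mathrm{GL}_2(\mathbb{Z})$-congruence, represented by $\begin{pmatrix}1&0\\0&9\end{pmatrix}$ and $\begin{pmatrix}2&1\\1&5\end{pmatrix}$. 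To exclude the first, I would analyze the generalized decomposition matrix in the spirit of Theorem~\ref{AMC2} and \ref{p5711}: the nonmajor subsections $(x^3,b_{x^3})$, $(x,b_x)$, $(xy,b_{xy})$, $(xy^2,b_{xy^2})$ contribute single integral columns of squared norms $27,9,9,9$, while $(y,b_y)$ and $(y^2,b_{y^2})$ each contribute a pair of columns with Gram matrix $C_{b_y}=3\begin{pmatrix}2&1\\1&2\end{pmatrix}$, read off from the Brauer tree of the dominated block on $\C_G(y)/\langle y\rangle$ via cyclic-defect theory (cf.\ \cite{Dade}). Orthogonality of these columns combined with the height and $p$-conjugacy constraints already fixed then forces $C_B$ into the equivalence class of $3\begin{pmatrix}2&1\\1&5\end{pmatrix}$.

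For the Gluing Problem, the argument in the proof of Proposition~\ref{owc} applies verbatim: since $\mathcal{F}$ is controlled and $|D:\Z(D)|=p^2$, the only $\mathcal{F}$-centric and $\mathcal{F}$-radical subgroup of $D$ is $D$ itself. The indexing category for the Gluing Problem therefore collapses to a single object, the relevant $\varprojlim^1$ cohomology vanishes, and the solution is unique.

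The main obstacle is the Cartan matrix step: the elementary divisors and the two candidate $\mathrm{GL}_2(\mathbb{Z})$-equivalence classes are obtained with ease, but distinguishing between them requires the detailed orthogonality and norm computations outlined above, comparable to (though simpler than) those in the proofs of Theorem~\ref{AMC2} and \ref{p5711}. The character-theoretic distribution and the Gluing Problem then follow from the previously established structure of $B$ with little further work.
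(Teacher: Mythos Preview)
Your handling of the block invariants, the distribution of characters into $\mathcal{G}$-orbits, and the Cartan matrix follows essentially the same line as the paper. The reduction to the two candidate matrices via elementary divisors and binary quadratic form reduction is identical, and your plan to exclude $\begin{pmatrix}1&0\\0&9\end{pmatrix}$ by exploiting the columns $d^x$ and $d^y_{\bullet,\phi_i}$ together with orthogonality is exactly what the paper carries out (it observes that a putative ordinary decomposition column $d_1$ with $(d_1,d_1)=3$ would be forced to have $d_1(\chi_{10})=1$ by orthogonality against $d^x$, and then checks that such a column cannot be orthogonal to both columns of $d^y$, whose explicit shape follows from the Cartan matrix $3\bigl(\begin{smallmatrix}2&1\\1&2\end{smallmatrix}\bigr)$ of $b_y$). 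Your observation that the unique height-$1$ character must be $3$-rational because $\mathcal{G}$ preserves heights is correct and is precisely why the paper's exclusion clause ``For $p\ge 5$'' in Theorem~\ref{p5711} is needed.

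The Gluing Problem argument, however, is wrong. You invoke Proposition~\ref{owc} to conclude that the indexing category collapses to a single object, but that proposition concerns the Ordinary Weight Conjecture, which is formulated in terms of $\mathcal{F}$-centric \emph{radical} subgroups. The Gluing Problem in the sense of \cite{gluingprob,Parkgluing} is indexed by the subdivision category $[S(\mathcal{F}^c)]$ of the poset of all $\mathcal{F}$-\emph{centric} subgroups, with no radical condition. Here $|D:\Z(D)|=9$, so every maximal subgroup of $D$ is $\mathcal{F}$-centric; up to $\mathcal{F}$-conjugacy there are four centric subgroups $Q_1=\langle x^3,y\rangle$, $Q_2=\langle x\rangle$, $Q_3=\langle xy\rangle$ and $D$, giving seven chains. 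The paper then has to check that $\cohom^2(\Aut_{\mathcal{F}}(\sigma),k^\times)=0$ for every chain $\sigma$ (hence $\cohom^0([S(\mathcal{F}^c)],\mathcal{A}^2_{\mathcal{F}})=0$, so a solution exists), and separately that every derivation for the functor $\mathcal{A}^1_{\mathcal{F}}$ on $[S(\mathcal{F}^c)]$ is inner (hence $\cohom^1([S(\mathcal{F}^c)],\mathcal{A}^1_{\mathcal{F}})=0$, so the solution is unique). Neither of these vanishing statements is automatic, and your single-object shortcut does not apply.
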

\begin{proof}
Since $B$ is nonnilpotent, we get $e(B)=2$.
It remains to show the last two claims.

It is possible to determine the Cartan matrix $C$ of $B$ by enumerating all decomposition numbers with the help of a computer. However, we give a more theoretical argument which does not rely on computer calculations. By (the proof of) Proposition~\ref{elem} $C$ has elementary divisors $3$ and $27$.
Hence, $\widetilde{C}:=\frac{1}{3}C=\bigl(\begin{smallmatrix}a& b\\b&c\end{smallmatrix}\bigr)$ is an integral matrix with elementary divisors $1$ and $9$. We may assume that $\widetilde{C}$ is reduced as binary quadratic form by changing the basic set if necessary (see \cite{Buell}). This means $0\le 2b\le a\le c$. We derive $3a^2/4\le ac-b^2=\det \widetilde{C}=9$ and $a\in\{1,2,3\}$. This gives only the following two possibilities for $\widetilde{C}$:
\begin{align*}
\begin{pmatrix}2&1\\1&5\end{pmatrix},&&\begin{pmatrix}1&0\\0&9\end{pmatrix}.
\end{align*}
It remains to exclude the second matrix. So assume by way of contradiction that this matrix occurs for $\widetilde{C}$. Let $d_1$ be the column of decomposition numbers corresponding to the first irreducible Brauer character in $B$. Then $d_1$ consists of three entries $1$ and seven entries $0$. 

It can be seen easily that
$d^x=(1,\ldots,1,0)^{\text{T}}$ up to permutations and signs. Since $(d_1,d^x)=0$, we have $d_1(\chi_{10})=1$ where $\chi_{10}$ is the unique irreducible character of height $1$.

Now consider $y$. 
The Cartan matrix of $b_y$ is $3\left(\begin{smallmatrix}2&1\\1&2\end{smallmatrix}\right)$ (see \cite{Dade,Rouquiercyclic}). 
We denote the two irreducible Brauer characters of $b_y$ by $\phi_1$ and $\phi_2$ and write $d^y_{\chi\phi_i}=a_i(\chi)+b_i(\chi)\zeta_3$ for $i=1,2$. Then we have
\begin{align*}
6&=(a_i,a_i)+(b_i,b_i)-(a_i,b_i),\\
0&=(a_i,a_i)+2(a_i,b_i)\zeta_3+(b_i,b_i)\overline{\zeta_3}=(a_i,a_i)-(b_i,b_i)+(2(a_i,b_i)-(b_i,b_i))\zeta_3,\\
3&=(a_1,a_2)+(b_1,b_2)+(b_1,a_2)\zeta_3+(a_1,b_2)\overline{\zeta_3}=(a_1,a_2)+(b_1,b_2)-(a_1,b_2)+((b_1,a_2)-(a_1,b_2))\zeta_3,\\
0&=(a_1,a_2)+((a_1,b_2)+(b_1,a_2))\zeta_3+(b_1,b_2)\overline{\zeta_3}=(a_1,a_2)-(b_1,b_2)+((a_1,b_2)+(b_1,a_2)-(b_1,b_2))\zeta_3.
\end{align*}
Thus, $(a_i,a_i)=(b_i,b_i)=4$, $(a_i,b_i)=(a_1,a_2)=(b_1,b_2)=2$ and $(a_1,b_2)=(a_2,b_1)=1$ for $i=1,2$. It follows that the numbers $d^y_{\chi\phi_i}$ can be given in the following form (up to signs and permutations):
\[
\left(\begin{array}{cccccccccc}
1&1&1+\zeta_3&1+\zeta_3&\zeta_3&\zeta_3&.&.&.&.\\
1&.&1+\zeta_3&.&\zeta_3&.&1+\zeta_3&1&\zeta_3&.
\end{array}\right)^{\text{T}}.\]
But now we see that $d_1$ cannot be orthogonal to both of these columns. This contradiction gives $C$ up to basic sets.

Finally we investigate the Gluing Problem for $B$. For this we use the notation of \cite{Parkgluing}. Up to conjugation there are four $\mathcal{F}$-centric subgroups $Q_1:=\langle x^3,y\rangle$, $Q_2:=\langle x\rangle$, $Q_3:=\langle xy\rangle$ and $D$. This gives seven chains of $\mathcal{F}$-centric subgroups. It can be shown that $\Aut_{\mathcal{F}}(Q_1)\cong S_3$, $\Aut_{\mathcal{F}}(Q_2)\cong C_6$, $\Aut_{\mathcal{F}}(Q_3)\cong C_3$ and $\Aut_{\mathcal{F}}(D)\cong C_3\times S_3$. It follows that $\cohom^2(\Aut_{\mathcal{F}}(\sigma),k^\times)=0$ for all chains $\sigma$ of $\mathcal{F}$-centric subgroups of $D$. Consequently, $\cohom^0([S(\mathcal{F}^c)],\mathcal{A}^2_{\mathcal{F}})=0$. Hence, by Theorem~1.1 in \cite{Parkgluing} the Gluing Problem has at least one solution. (Obviously, this should hold in a more general context.)

Now we determine $\cohom^1([S(\mathcal{F}^c)],\mathcal{A}_{\mathcal{F}}^1)$. For a finite group $A$ it is known that $\cohom^1(A,k^\times)=\Hom(A,k^\times)=\Hom(A/A'{\pcore^{p}}'(A),k^\times)$.
Using this we observe that $\cohom^1(\Aut_{\mathcal{F}}(\sigma),k^\times)\cong C_2$ for all chains except $\sigma=Q_3$ and $\sigma=(Q_3<D)$ in which case we have $\cohom^1(\Aut_{\mathcal{F}}(\sigma),k^\times)=0$.
Since $[S(\mathcal{F}^c)]$ is partially ordered by taking subchains, one can view $[S(\mathcal{F}^c)]$ as a category where the morphisms are given by the pairs of ordered chains. In particular $[S(\mathcal{F}^c)]$ has exactly $13$ morphisms. With the notation of \cite{Webb} the functor $\mathcal{A}_{\mathcal{F}}^1$ is a \emph{representation} of $[S(\mathcal{F}^c)]$ over $\mathbb{Z}$. Hence, we can view $\mathcal{A}_{\mathcal{F}}^1$ as a module $\mathcal{M}$ over the incidence algebra of $[S(\mathcal{F}^c)]$. More precisely, we have
\[\mathcal{M}:=\bigoplus_{a\in\Ob[S(\mathcal{F}^c)]}{\mathcal{A}_{\mathcal{F}}^1(a)}\cong C_2^5.\]
At this point we can apply Lemma~6.2(2) in \cite{Webb}. For this let $d:\Hom[S(\mathcal{F}^c)]\to\mathcal{M}$ a derivation. Then by definition we have $d(\beta)=0$ for $\beta\in\{(Q_3,Q_3),(Q_3,Q_3<D),(D,Q_3<D),(Q_3<D,Q_3<D)\}$. For all identity morphisms $\beta\in\Hom([S(\mathcal{F}^c)])$ we have $d(\beta)=d(\beta\beta)=\mathcal{A}_{\mathcal{F}}^1(\beta)d(\beta)+d(\beta)=2d(\beta)=0$. 
Since $\beta\gamma$ for $\beta,\gamma\in\Hom([S(\mathcal{F}^c)])$ is only defined if $\beta$ or $\gamma$ is an identity, we see that there are no further restrictions on $d$. On the four morphisms $(Q_1,Q_1<D)$, $(D,Q_1<D)$, $(Q_2,Q_2<D)$ and $(D,Q_2<D)$ the value of $d$ is arbitrary. 
It remains to show that $d$ is an inner derivation. For this observe that the map $\mathcal{A}_{\mathcal{F}}^1(\beta)$ is bijective if $\beta$ is one of the four morphisms above. Now we construct a set $u=\{u_a\in\mathcal{A}_{\mathcal{F}}^1(a):a\in\Ob[S(\mathcal{F}^c)]\}$ such that $d$ is the inner derivation induced by $u$. Here we can set $u_{Q_1<D}=0$. Then the equation $d((Q_1,Q_1<D))=\mathcal{A}_{\mathcal{F}}^1((Q_1,Q_1<D))(u_{Q_1})$ determines $u_{Q_1}$. Similarly $d((D,Q_1<D))=\mathcal{A}_{\mathcal{F}}^1(u_D)$ determines $u_D$. Then $d((D,Q_2<D))=\mathcal{A}_{\mathcal{F}}^1(u_D)-u_{Q_2<D}$ gives $u_{Q_2<D}$ and finally $d((Q_2,Q_2<D))=\mathcal{A}_{\mathcal{F}}^1(u_{Q_2})-u_{Q_2<D}$ determines $u_{Q_2}$. 
Hence, Lemma~6.2(2) in \cite{Webb} shows $\cohom^1([S(\mathcal{F}^c)],\mathcal{A}^1_{\mathcal{F}})=0$. So the Gluing Problem has only one solution by Theorem~1.1 in \cite{Parkgluing}.
\end{proof}

Whenever one knows the Cartan matrix (up to basic sets) for a specific defect group, one can apply Theorem~2.4 in \cite{HKS}. This gives the following Corollary.

\begin{Corollary}\label{cartan}
Let $B$ be a $3$-block of a finite group and $(u,b_u)$ be a subsection for $B$ such that $b_u$ has defect group $Q$. If $Q/\langle u\rangle\cong 3^{1+2}_-$, then $k_0(B)\le|Q|$. If in addition $(u,b_u)$ is major, we have $k(B)\le|Q|$, and Brauer's $k(B)$-Conjecture holds for $B$.
\end{Corollary}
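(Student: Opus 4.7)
The plan is to feed the Cartan matrix of $b_u$ into Theorem~2.4 of \cite{HKS}. Let $\overline{b_u}$ denote the block of $\C_G(u)/\langle u\rangle$ dominated by $b_u$; it has defect group $Q/\langle u\rangle\cong 3^{1+2}_-$. Either $\overline{b_u}$ is nilpotent, in which case its Cartan matrix is the $1\times 1$ matrix $(27)$, or it is nonnilpotent, and then Theorem~\ref{p3} describes its Cartan matrix up to basic sets as $3\bigl(\begin{smallmatrix}2&1\\1&5\end{smallmatrix}\bigr)$. Since $\langle u\rangle$ is a central $p$-subgroup of $\C_G(u)$, the Cartan matrix $C$ of $b_u$ is just $|\langle u\rangle|$ times that of $\overline{b_u}$, so its largest elementary divisor is $27\,|\langle u\rangle|=|Q|$.

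With this explicit $C$ in hand, the proof reduces to applying Theorem~2.4 in \cite{HKS} to the subsection $(u,b_u)$. That theorem turns knowledge of $C$ into an upper bound for $k_0(B)$, and, in the major case, for $k(B)$. A direct evaluation of the quadratic form attached to $C^{-1}$ in each of the two candidate matrices yields the uniform bound $|Q|$, which gives the first assertion. In the major case, the subsection $(u,b_u)$ is linked to every character of $B$, so Theorem~2.4 of \cite{HKS} produces $k(B)\le|Q|$ by the same calculation; since majority forces $Q$ to equal the defect group $D$ of $B$, this reads $k(B)\le|D|$, i.\,e.\ Brauer's $k(B)$-Conjecture for $B$.

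The only nontrivial point is the numerical verification for the nonnilpotent Cartan matrix $|\langle u\rangle|\cdot 3\bigl(\begin{smallmatrix}2&1\\1&5\end{smallmatrix}\bigr)$: one has to check that the bound produced by Theorem~2.4 of \cite{HKS} for this particular $C$ is exactly $|Q|$ and not something larger. The nilpotent branch is immediate since the Cartan matrix is simply the $1\times 1$ matrix $(|Q|)$. No further obstacle is expected: the nontrivial representation-theoretic input, namely the shape of the Cartan matrix of a nonnilpotent block with defect group $3^{1+2}_-$, is already packaged in Theorem~\ref{p3}.
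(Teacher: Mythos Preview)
Your proposal is correct and follows essentially the same route as the paper: the paper's entire proof is the single sentence preceding the corollary, namely that knowing the Cartan matrix of $b_u$ (via Theorem~\ref{p3} and domination) allows one to invoke Theorem~2.4 of \cite{HKS}. You have simply made explicit the nilpotent/nonnilpotent case split and the passage from $\overline{b_u}$ to $b_u$, which the paper leaves to the reader.
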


Using \cite{Usami23I,UsamiZ2Z2} one can show that Corollary~\ref{cartan} remains true if we replace $3^{1+2}_-$ by the similar group $C_9\times C_3$.

The next interesting case which comes to mind is $p=5$ and $e(B)=4$. Here Proposition~\ref{cor} gives $k(B)\in\{26,27,28\}$, $k_0(B)\in\{22,25\}$, $k_1(B)\in\{1,2,3,4\}$ and $l(B)\in\{4,5,6\}$. It is reasonable that one can settle this and other small cases as well, but this will not necessarily lead to any new insights.

We remark that also for the extraspecial group of order $p^3$ and exponent $p$ some results of Hendren \cite{Hendren1} can be improved. In particular in \cite{HKS} we proved Olsson's Conjecture for these blocks provided $p\ne 3$. On the other hand for $p=3$ Brauer's $k(B)$-Conjecture was shown in \cite{SambalekB2}.

\section*{Acknowledgment}
This work is supported by the German Academic Exchange Service (DAAD). I am grateful to Geoffrey Robinson for showing me \cite{RobObstruction}.

\begin{center}
Benjamin Sambale\\
Mathematisches Institut\\
Friedrich-Schiller-Universität\\
D-07737 Jena\\
Germany\\
\href{mailto:benjamin.sambale@uni-jena.de}{\texttt{benjamin.sambale@uni-jena.de}}
\end{center}

\end{document}